\documentclass[10pt,a4paper,english]{scrartcl}
\usepackage{lmodern}

\usepackage[T1]{fontenc}
\usepackage[utf8]{inputenc}
\usepackage{mathtools}
\usepackage{amsmath}
\usepackage{amsthm}
\usepackage{amssymb}

\makeatletter


\theoremstyle{plain}
\newtheorem{thm}{\protect\theoremname}
\theoremstyle{plain}
\newtheorem{lem}[thm]{\protect\lemmaname}
\theoremstyle{remark}
\newtheorem{rem}[thm]{\protect\remarkname}

\@ifundefined{date}{}{\date{}}
%
\newenvironment{thmbis}[1]
  {%
   \addtocounter{thm}{-1}%
   \begin{thm}}
  {\end{thm}}

\makeatother

\usepackage{babel}
\providecommand{\lemmaname}{Lemma}
\providecommand{\remarkname}{Remark}
\providecommand{\theoremname}{Theorem}

\begin{document}
\global\long\def\R{\mathbb{R}}%
\global\long\def\vu{\sqrt{1+|\nabla u|^{2}}}%
\global\long\def\divergence{\mathop{\mathrm{div}}}%
\global\long\def\graph{\mathop{\mathrm{graph}}}%
\global\long\def\L{\mathop{\mathcal{L}}}%
\global\long\def\ddt{\frac{\mathrm{d}}{\mathrm{d}t}}%

\title{$\boldsymbol{\mathrm{H^{\alpha}}}$-flow of mean convex, complete
graphical hypersurfaces}
\author{Wolfgang Maurer\thanks{funded by the Deutsche Forschungsgemeinschaft (DFG, German Research Foundation) Project number 336454636}}
\maketitle
\begin{abstract}
We consider the evolution of hypersurfaces in $\R^{n+1}$ with normal
velocity given by a positive power of the mean curvature. The hypersurfaces
under consideration are assumed to be strictly mean convex (positive
mean curvature), complete, and given as the graph of a function. Long-time
existence of the $H^{\alpha}$-flow is established by means of approximation
by bounded problems.
\end{abstract}

\section{Introduction}

Let $\alpha>0$. The flow by the $\alpha^{\text{th}}$ power of the
mean curvature, or short $H^{\alpha}$-flow, is the evolution of a
hypersurface $M_{t}$, such that at each point the normal velocity
equals $H^{\alpha}$, the $\alpha^{\text{th}}$ power of the mean
curvature $H$. The hypersurface is assumed to be strictly mean convex
($H>0$) at all times. If $X(\cdot,t)\colon M^{n}\to\mathbb{R}^{n+1}$
are (local) embeddings of the time-dependent hypersurface $M_{t}$,
then $H^{\alpha}$-flow is described by the equation ($\nu$ is the
normal vector)
\begin{equation}
\langle\dot{X},\nu\rangle=H^{\alpha}\,.\label{eq H^alpha-flow}
\end{equation}

In the case of $\alpha=1$, one obtains the mean curvature flow. The
mean curvature flow has been studied extensively and continues to
be an active area of research. Among all curvature functions one can
impose for the normal speed the mean curvature certainly is outstanding
in its importance and comparable simplicity. A very influential paper
was \cite{Huisken} where Huisken has shown that mean curvature flow
shrinks convex hypersurfaces to ``round points.'' Similar results
have been proven for various other normal speeds of homogeneity one
before Schulze investigated the $H^{\alpha}$-flow, which has homogeneity
$\alpha$, in \cite{Schulze-Evolution} (also cf. \cite{Schulze-Convexity,BCD,AW}).
The $H^{\alpha}$-flow is a step away from homogeneity one but still
has a fairly simple structure.

In this paper we are concerned with graphical hypersurfaces. For the
mean curvature flow of entire graphs long-time existence has been
established in \cite{EH}. Franzen generalized this to $H^{\alpha}$-flow
of entire graphs in \cite{Franzen} under some technical assumptions.
Sáez and Schnürer generalized the mean curvature flow of entire graphs
in a different direction: In \cite{SS} they considered complete graphs.
In contrast to entire graphs these need not be defined over all of
$\R^{n}$ and are instead defined over an open subset. To represent
a complete hypersurface, the graph representation must tend to infinity
at the boundary of this subset. It turns out that in this case, too,
no singularities occur on a finite level, where they would be visible.
Instead, singularities form at infinity in some sense.

In this paper we bring together these two trends of generalization
and prove long-time existence for the $H^{\alpha}$-flow of complete
graphs. The following is the main result.
\begin{thm}
\label{thm intro}Let $\alpha>0$. Let $\Omega_{0}\subset\mathbb{R}^{n}$
be open. Let $u_{0}\colon\Omega_{0}\to\mathbb{R}$ be smooth and such
that $\graph u_{0}$ is of positive mean curvature. Furthermore, we
suppose that the sets $\{x\in\R^{n}:u_{0}(x)\le a\}$ are compact
for any $a\in\mathbb{R}$.

Then there exists an $H^{\alpha}$-flow $(u,\Omega)$ of complete
graphs with initial value $(u_{0},\Omega_{0})$ (cf.\ Theorem \ref{thm main}
for a precise formulation).
\end{thm}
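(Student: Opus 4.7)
The strategy is to realize the complete graph as an exhausting union of bounded mean convex graphs, flow each of them by $H^{\alpha}$-flow, and extract a limit via interior estimates. For each $a\in\R$, set $\Omega_0^a := \{x\in\R^n : u_0(x)<a\}$; by hypothesis this is bounded, and the family exhausts $\Omega_0$ as $a\to\infty$. I would use as approximating initial datum a mean convex graph $u_0^a$ defined on $\Omega_0^a$ that agrees with $u_0$ in the interior and tends to $+\infty$ at $\partial\Omega_0^a$ (constructed, say, by smoothly attaching a vertical collar above $\partial\Omega_0^a$ to the truncation of $u_0$ below height $a$). Each approximation then has the structure of a complete graph over a bounded domain, for which short-time existence of the $H^{\alpha}$-flow follows from \cite{Schulze-Evolution}; long-time existence of the corresponding complete graphical flow $(u^a,\Omega^a_t)$ on $\Omega_0^a$ should follow by adapting the techniques of \cite{Franzen} to this bounded setting, maintaining $u^a(\cdot,t)\to\infty$ at $\partial\Omega_0^a$ for every $t\ge 0$.

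The technical core is then to establish interior estimates for $u^a$ that are \emph{uniform in $a$}. These are: (i) a lower bound for $u^a$ on compact subsets of $\Omega_0$ by comparison with a stationary barrier built from the initial datum; (ii) a uniform local bound on $\vu$, obtained by a maximum-principle argument applied to $\vu\,\eta$ for a suitable space-time cutoff $\eta$, in the spirit of Ecker-Huisken but adapted to the $H^{\alpha}$-evolution of $\vu$; (iii) a uniform local lower bound on $H$, which keeps the fully nonlinear equation $\partial_t u = \vu\cdot H^{\alpha}$ strictly parabolic; and (iv) uniform interior $C^{k,\beta}$ estimates via Krylov-Safonov and Evans-Krylov regularity for fully nonlinear parabolic equations. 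A diagonal subsequence then converges smoothly on compact subsets to a limit $(u,\Omega)$ with $\Omega_t := \{x : \limsup_{a\to\infty} u^a(x,t)<\infty\}$, giving the desired $H^{\alpha}$-flow of complete graphs, and continuity at $t=0$ recovers the initial datum.

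The main obstacle is step (iii), the uniform local lower bound on $H$. Schulze's preservation of $\min H$ under $H^{\alpha}$-flow is global and depends on the infimum of $H$ over the compact initial approximation; this infimum may collapse as $a\to\infty$, and for $\alpha\neq 1$ the equation is no longer quasilinear, so the pleasant maximum-principle computations for $H/\vu$ available in the mean curvature flow case do not transfer verbatim. The remedy I would pursue is a Sáez-Schnürer-style localization combined with the nonlinear maximum-principle arguments of \cite{Schulze-Evolution} and \cite{Franzen}, using barriers tailored to graphs confined above a prescribed height on a fixed ball. Once this lower bound is in place, estimates (i), (ii) and (iv) should follow along established lines, and the main theorem is obtained.
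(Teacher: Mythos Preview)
Your overall architecture (exhaust, approximate, estimate, pass to the limit) matches the paper's, but the approximation scheme you propose has a genuine gap. You want each approximation $u^a$ to be itself a \emph{complete} graph over the bounded domain $\Omega_0^a$, with $u^a(\cdot,t)\to\infty$ at $\partial\Omega_0^a$. But establishing long-time existence for such an object is essentially the theorem you are trying to prove, only over a bounded base; nothing in \cite{Schulze-Evolution} (which treats closed hypersurfaces) or \cite{Franzen} (which treats entire graphs with a technical $\nu$-condition) gives this for free, and ``adapting Franzen to the bounded setting'' is not a routine step. In other words, your approximating problems are no easier than the target problem.

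The paper avoids this circularity by using \emph{compact} Dirichlet problems instead: one cuts off $u_0$ at height $a_k$, imposes the affine boundary condition $u=ct$ on $\partial Q_k$, and---this is the key device you are missing---replaces the exponent $\alpha$ by a position-dependent $\tilde\alpha(x)$ that equals $\alpha$ well inside $Q_k$ but equals $1$ near $\partial Q_k$. Near the boundary the equation is then quasilinear mean curvature flow, so boundary $C^2$-estimates and compatibility conditions (handled via an additional cut-off term) become standard. This is what makes the approximating problems genuinely tractable. Your proposal also omits the \emph{upper} bound on $H$ needed for uniform parabolicity when $\alpha<1$, and your step (iii) remains a sketch; the paper obtains both bounds on $H$ (and the curvature bound) by specific height-localized test functions of the form $\psi=(a-bt-X^{n+1})_+$ combined with $w$, $H^\alpha$, and $|A|^2$, rather than by the barrier-and-Ecker--Huisken heuristics you outline.
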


At this point it is mandatory to say more about the existing literature
and on how this result fits in. Concerning entire graphs, we have
already mentioned \cite{Franzen}, where the $H^{\alpha}$-flow is
considered, too, and the results of which we could improve by dropping
a certain condition called ``$\nu$-condition'' as well as expanding
the result to complete graphs. Still concerning entire graphs, the
author wants to mention \cite{Holland}, where flows with speed $S_{k}^{1/k}$
with $S_{k}$ an elementary symmetric polynomial of the principal
curvatures are considered, and \cite{AS}, where general curvature
flows of homogeneity one are investigated. Inspired by the mean curvature
flow of complete graphs there is a number of papers that are concerned
with the flow of complete graphs by various normal speeds. In \cite{Xiao},
Xiao provides the needed a priori estimates for general curvature
flows of homogeneity one. In two papers \cite{CD,CDKL} the existence
of the flow of complete graphs by powers of the Gaussian curvature
and the $Q_{k}$-flow of convex, complete graphs is established. In
\cite{LiLv}, the flow by powers of general curvature functions is
considered for complete graphs that are convex. While the $H^{\alpha}$-flow
that is considered in the present paper is fully non-linear and not
of homogeneity one, it is still subsumed in the latter paper \cite{LiLv}.
However, in the present paper we do not demand that the hypersurfaces
be convex; we only assume mean convexity which is the natural assumption
in this case. This does not only make the a priori estimates more
difficult but also significantly changes the approximation scheme.
In the convex case one usually cuts off at some height and reflects
at that height to get a convex closed hypersurface. These usually
behave nicely under geometric flows. In particular the two symmetric
parts stay graphical. This idea does not work anymore without convexity.

Having already touched upon the proof, let us briefly summarize the
main ideas. To established long-time existence for the $H^{\alpha}$-flow
of complete graphs, we approximate the non-compact problem by compact
ones. To this end, we cut off the initial complete graph at increasing
heights. These will then serve as initial data for initial boundary
value problems with Dirichlet boundary condition. These approximation
problems can then be solved using parabolic PDE-theory in Hölder-spaces.
Instead of $H^{\alpha}$-flow the author found it easier to consider
$H^{\alpha(x)}$-flow for these problems. The reason is that estimates
at the boundary are difficult for the fully nonlinear flow and changing
the equation towards the boundary to the quasilinear mean curvature
flow ($\alpha=1$) facilitates the problem. For being able to extract
a limit from the approximation problems, it is crucial to have local
a priori estimates. These are gained through the maximum principle
applied to a suitable test function. Here it suffices to have bounds
that are local in height and one may utilize the height function as
a localization function. With these it is possible to pass to a limit
using a variation on the Arzelà-Ascoli theorem from \cite{SS}.

The paper is organized as follows. After some preliminaries about
our sign-conventions, different parametrizations for the $H^{\alpha}$-flow,
and evolution equations, we introduce and solve the approximation
problems in Section \ref{sec:Auxiliary-problems}. Section \ref{sec:A-priori-estimates}
is devoted to the needed a priori estimates that are local in height.
Finally, in Section \ref{sec:Complete-hypersurfaces} the main theorem
is proven.

The author is grateful to Oliver Schnürer for suggesting looking into
the topic and for discussions. The author also likes to thank Benjamin
Lambert for starting a joint project on the topic and sharing his
prolific thoughts.

\section{Preliminaries}

\subsection{Sign convention}

Let $X(\cdot,t)\colon M^{n}\to\R^{n+1}$ be a family of immersions
with parameter $t$ which we interpret as time. Our convention for
the normal $\nu$ is that it points in the same direction as the mean
curvature vector $\vec{H}=\Delta X$ ($\Delta$ denotes the Laplace-Beltrami
operator with respect to the induced metric). This is a well-defined
choice of normal because the hypersurfaces are assumed to be strictly
mean convex. Accordingly, the second fundamental form is defined by
$h_{ij}=\left\langle X_{ij},\,\nu\right\rangle $ and the mean curvature
is given by $H=g^{ij}\,h_{ij}$, where $(g^{ij})$ is the inverse
of the metric $(g_{ij})$ induced by $X(\cdot,t)$. For example, if
$M_{t}$ bounds a convex domain, then $(h_{ij})$ is non-negative
and $\nu$ points into the interior.

\subsection{Parametrizations}

Equation (\ref{eq H^alpha-flow}) is invariant under time-dependent
reparametrizations. So there is still a certain (large) degree of
freedom in that equation. This can be fixed by fully prescribing $\dot{X}$
instead of only its normal component. Of course there are many different
ways to do this, which we refer to as different parametrizations.
We will use two different ones, the parametrization along the normal
and the graphical parametrization. In a parametrization with $\dot{X}$
pointing along the normal direction equation (\ref{eq H^alpha-flow})
becomes 
\begin{equation}
\dot{X}=H^{\alpha}\,\nu\,.\label{eq H^alpha-flow nu}
\end{equation}
If the evolving hypersurface is described by graphs of functions $u(\cdot,t)$
with the assumption $\left\langle \nu,e_{n+1}\right\rangle >0$ (with
$e_{n+1}=(0,\ldots,0,1)$) and if we use a graphical parametrization,
i.e., $X(x,t)=(x,u(x,t))$, equation (\ref{eq H^alpha-flow}) is equivalent
to 
\begin{equation}
\frac{\partial_{t}u}{\vu}=\left(\divergence\left(\frac{\nabla u}{\vu}\right)\right)^{\alpha}\equiv H^{\alpha}\,,\label{eq H^alpha-flow graph}
\end{equation}
where we have used
\[
w\coloneqq\left\langle \nu,e_{n+1}\right\rangle \equiv\frac{1}{\vu}\,.
\]

The parametrization along the normal (\ref{eq H^alpha-flow nu}) is
geometrically appealing and yields comprehensible evolution equations
for geometric quantities. We shall use it for deriving the needed
estimates. The graphical parametrization (\ref{eq H^alpha-flow graph})
is preferable from a PDE point of view. It is a scalar parabolic differential
equation and we can harness the power of the theory of those equations.
Higher estimates and existence of solutions are our takings.

\subsection{Evolution equations}

For the estimates we rely on the maximum principle. It is applied
in conjunction with the linear parabolic operator 
\[
\L\coloneqq\frac{\partial_{t}}{\alpha(X)\,H^{\alpha(X)-1}}-\Delta_{M_{t}}\,.
\]
This operator is defined on a hypersurface that evolves by $H^{\alpha(X)}$-flow
with parametrization along the normal, i.~e.\ , the immersions $X(\cdot,t)$
satisfy (\ref{eq H^alpha-flow nu}). The position-dependence of the
exponent is needed in the approximation problems and therefore incorporated.
For the a priori estimates of Section \ref{sec:A-priori-estimates}
we do not need the dependence on the position.

We will apply the operator $\L$ to different geometric quantities.
The outcome is summarized in the following lemma.
\begin{lem}
The evolution equations for $X$, $\nu$, $H^{\alpha(X)}$, and $|A|^{2}$
are given by (we suppress the dependence of $\alpha$ on $X$ in the
notation)
\begin{align}
\L X & =\left(\frac{1}{\alpha}-1\right)\,H\nu\,,\label{eq: ee X}\\
\L\nu & =|A|^{2}\,\nu-\frac{1}{\alpha}\,H\log(H)\,X_{i}^{\beta}\,\alpha_{\beta}\,g^{ij}\,X_{j}\,,\label{eq: ee nu}\\
\L H^{\alpha} & =\left(|A|^{2}+\frac{1}{\alpha}\,H\log(H)\,\partial_{\nu}\alpha\right)H^{\alpha}\,,\label{eq: ee H^alpha}\\
\L|A|^{2} & =-2\,|\nabla A|^{2}+2\,(\alpha-1)\,H^{-1}\,h^{ij}\,\nabla_{i}H\,\nabla_{j}H+2\,|A|^{4}+2\,\frac{1-\alpha}{\alpha}\,H\,h_{j}^{i}\,h_{k}^{j}\,h_{i}^{k}\label{eq: ee |A|^2}\\
 & \quad+4\left(\log(H)+\frac{1}{\alpha}\right)\alpha_{\beta}\,X_{j}^{\beta}\,h^{ij}\,\nabla_{j}H\nonumber \\
 & \quad+2\frac{1}{\alpha}\,H\log(H)\,\left(\left(\alpha_{\beta\gamma}+\log(H)\,\alpha_{\beta}\,\alpha_{\gamma}\right)X_{i}^{\beta}\,X_{j}^{\gamma}\,h^{ij}+\partial_{\nu}\alpha\,|A|^{2}\right)\,.\nonumber 
\end{align}
\end{lem}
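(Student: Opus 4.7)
The plan is to compute $\partial_t$ and $\Delta_{M_t}$ of each quantity separately and then combine. Throughout I would use the standard consequences of the normal parametrisation (\ref{eq H^alpha-flow nu}) with speed $F := H^{\alpha(X)}$: namely $\partial_t g_{ij} = -2F h_{ij}$, $\partial_t g^{ij} = 2F h^{ij}$, $\partial_t \nu = -\nabla^{M_t} F$, and (from a short calculation with the Gauss formula) $\partial_t h_{ij} = \nabla_i \nabla_j F - F h_i^{\ k} h_{kj}$; these in turn yield $\partial_t H = \Delta_{M_t} F + F |A|^2$ and $\partial_t |A|^2 = 2 F\,\mathrm{tr}(A^3) + 2 h^{ij} \nabla_i \nabla_j F$. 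The position dependence of $\alpha = \alpha(X)$ enters via the chain rule: $\partial_t \alpha = \alpha_\beta \dot{X}^\beta = H^\alpha \partial_\nu \alpha$, the tangential gradient is $\nabla_i \alpha = \alpha_\beta X_i^\beta$, and the tangential Hessian reads
\begin{equation*}
\nabla_i\nabla_j \alpha = \alpha_{\beta\gamma} X_i^\beta X_j^\gamma + h_{ij}\,\partial_\nu \alpha,
\end{equation*}
the last term coming from the normal component $h_{ij}\nu$ in the Gauss decomposition of $X_{ij}$.

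Equations (\ref{eq: ee X}) and (\ref{eq: ee nu}) then follow almost immediately. First, $\Delta_{M_t} X = \vec{H} = H\nu$ combined with $\dot X = H^\alpha \nu$ gives $\L X = (\tfrac{1}{\alpha} - 1) H\nu$. Second, the Weingarten equation $\nu_{,i} = -h_i^{\ k} X_k$ together with contracted Codazzi yields $\Delta_{M_t}\nu = -|A|^2 \nu - \nabla^{M_t} H$; expanding $\partial_t \nu = -\nabla^{M_t}(H^\alpha) = -\alpha H^{\alpha-1} \nabla^{M_t} H - H^\alpha \log H\,\nabla^{M_t} \alpha$ and dividing by $\alpha H^{\alpha-1}$ produces a $-\nabla^{M_t}H$ that cancels the one coming from $-\Delta_{M_t}\nu$, leaving exactly (\ref{eq: ee nu}). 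For (\ref{eq: ee H^alpha}) the chain rule gives $\partial_t(H^\alpha) = \alpha H^{\alpha-1}\partial_t H + H^\alpha\log H\,\partial_t\alpha$; inserting $\partial_t H = \Delta_{M_t}(H^\alpha) + H^\alpha|A|^2$ and $\partial_t\alpha = H^\alpha\partial_\nu\alpha$, dividing by $\alpha H^{\alpha-1}$ and subtracting $\Delta_{M_t}(H^\alpha)$ kills the $\Delta_{M_t}(H^\alpha)$-term and leaves precisely the stated right-hand side.

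The real work is (\ref{eq: ee |A|^2}). Simons' identity in the present sign convention reads $\Delta_{M_t} h_{ij} = \nabla_i\nabla_j H + H h_{ik}h^k_{\ j} - |A|^2 h_{ij}$, so that
\begin{equation*}
\Delta_{M_t}|A|^2 = 2|\nabla A|^2 + 2 h^{ij}\nabla_i\nabla_j H + 2 H\,\mathrm{tr}(A^3) - 2|A|^4,
\end{equation*}
and combining this with $\partial_t|A|^2 = 2F\,\mathrm{tr}(A^3) + 2h^{ij}\nabla_i\nabla_j F$ reduces the whole statement to expanding $\nabla_i\nabla_j F$ with $F = H^{\alpha(X)}$. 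Two applications of the product and chain rules, using the Hessian formula above, express this as $\alpha H^{\alpha-1}\nabla_i\nabla_j H$ plus explicit lower-order pieces that are quadratic in $\nabla H$, quadratic in $\nabla\alpha$, bilinear in $\nabla H\otimes\nabla\alpha$, and linear in $\nabla\nabla\alpha$, with coefficients involving $\log H$ and powers of $1/\alpha$. After dividing by $\alpha H^{\alpha-1}$, the $2h^{ij}\nabla_i\nabla_j H$ cancels against its Simons counterpart; the $(2/\alpha - 2)H\,\mathrm{tr}(A^3)$ collapses into the $(1-\alpha)/\alpha$ cubic coefficient; and the remaining pieces regroup, via $h^{ij}\nabla_i\nabla_j\alpha = \alpha_{\beta\gamma}X_i^\beta X_j^\gamma h^{ij} + |A|^2\partial_\nu\alpha$, into the four mixed terms of (\ref{eq: ee |A|^2}); the $|A|^2\partial_\nu\alpha$ summand is then traced directly out of the extrinsic correction in the Hessian of $\alpha$. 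The only genuine difficulty is combinatorial bookkeeping: tracking the $(\log H)^k$-factors produced by repeated differentiation of $H^\alpha$ with $\alpha$ varying, and keeping the extrinsic $h_{ij}\partial_\nu\alpha$ contribution straight throughout.
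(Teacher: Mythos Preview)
Your proposal is correct and follows essentially the same approach as the paper's proof: both use the standard evolution identities for $g_{ij}$, $\nu$, and $h_{ij}$ under a normal-direction flow with speed $F=H^{\alpha(X)}$, the Simons commutator $\Delta h_{ij}=\nabla_i\nabla_j H+Hh_i^{\ k}h_{kj}-|A|^2h_{ij}$, and the chain rule expansion of $\nabla_i\nabla_j H^{\alpha(X)}$ together with the extrinsic Hessian formula $\nabla_i\nabla_j\alpha=\alpha_{\beta\gamma}X_i^\beta X_j^\gamma+h_{ij}\partial_\nu\alpha$. The only organisational difference is that the paper passes through the intermediate evolution equation for $h_{ij}$ and then contracts with $2h^{ij}$, whereas you go straight to $\partial_t|A|^2$ and $\Delta|A|^2$; these are equivalent rearrangements of the same computation.
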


\begin{proof}
The first follows directly from $\dot{X}=H^{\alpha}\,\nu$ and $\Delta X=H\,\nu$.

For the normal $\nu$, we notice that the condition $\left\langle \nu,\nu\right\rangle =1$
implies $\left\langle \dot{\nu},\nu\right\rangle =0$ and $\left\langle \nu,\nu_{i}\right\rangle =0$.
Applying a time derivative to $\left\langle \nu,X_{i}\right\rangle =0$
we obtain
\begin{align*}
\left\langle \dot{\nu},X_{i}\right\rangle  & =-\left\langle \nu,\dot{X}_{i}\right\rangle =-\left\langle \nu,\left(H^{\alpha(X)}\,\nu\right)_{i}\right\rangle =-\left(H^{\alpha(X)}\right)_{i}\\
 & =-\alpha\,H^{\alpha-1}H_{i}-\log(H)\,H^{\alpha}\,\alpha_{\beta}\,X_{i}^{\beta}\,.
\end{align*}
Therefore, we conclude 
\[
\dot{\nu}=-\left(\alpha\,H^{\alpha-1}H_{i}+\log(H)\,H^{\alpha}\,\alpha_{\beta}\,X_{i}^{\beta}\right)g^{ij}X_{j}\,.
\]
By Weingarten's equation and Codazzi's equation there holds
\[
\Delta\nu=g^{ij}\,\nu_{ij}=-g^{ij}\,(h_{i}^{k}\,X_{k})_{j}=-\left(\nabla^{k}H\right)X_{k}-h_{i}^{k}\,h_{k}^{i}\,\nu\,.
\]
Combing the last two yields the assertion.

Before we continue with the evolution equation for $H^{\alpha(X)}$,
we consider
\begin{align}
\ddt g_{ij} & =\ddt\left\langle X_{i},X_{j}\right\rangle =\left\langle \left(H^{\alpha(X)}\,\nu\right)_{i},X_{j}\right\rangle +\left\langle X_{i},\left(H^{\alpha(X)}\,\nu\right)_{j}\right\rangle \nonumber \\
 & =0-H^{\alpha}\,h_{ij}+0-H^{\alpha}\,h_{ij}=-2H^{\alpha}\,h_{ij}\,,\nonumber \\
\ddt g^{ij} & =-g^{ik}\left(\ddt g_{kl}\right)g^{lj}=2H^{\alpha}\,h^{ij}\,,\nonumber \\
\frac{\mathrm{d}}{\mathrm{d}t}h_{ij} & =-\ddt\left\langle \nu_{j},X_{i}\right\rangle =-\left\langle \dot{\nu}_{j},X_{i}\right\rangle -\left\langle \nu_{j},\dot{X}_{i}\right\rangle \nonumber \\
 & =\left\langle \left(\nabla^{k}\left(H^{\alpha(X)}\right)X_{k}\right)_{j},X_{i}\right\rangle +\left\langle h_{j}^{k}\,X_{k},\left(H^{\alpha(X)}\,\nu\right)_{i}\right\rangle \nonumber \\
 & =\nabla_{j}\nabla_{i}H^{\alpha(X)}+0+0-H^{\alpha}\,h_{j}^{k}\,h_{ik}\,.\label{eq:h_ij dot}
\end{align}
Now we are in position:
\begin{align*}
\ddt H^{\alpha(X)} & =\alpha\,H^{\alpha-1}\left(g^{ij}\left(\ddt h_{ij}\right)+\left(\ddt g^{ij}\right)h_{ij}\right)+H^{\alpha}\log(H)\,\alpha_{\beta}\ddt X^{\beta}\\
 & =\alpha\,H^{\alpha-1}\left(\Delta H^{\alpha(X)}-H^{\alpha}\,|A|^{2}+2H^{\alpha}\,|A|^{2}\right)+H^{\alpha}\log(H)\,\alpha_{\beta}\,H^{\alpha}\,\nu^{\beta}\\
 & =\alpha\,H^{\alpha-1}\left(\Delta H^{\alpha(X)}+H^{\alpha}|A|^{2}+\frac{1}{\alpha}\,H^{\alpha+1}\log(H)\,\partial_{\nu}\alpha\right)\,.
\end{align*}

Next we compute the evolution equation of $h_{ij}$. To this end we
compute the term 
\begin{align*}
\nabla_{j}\nabla_{i}H^{\alpha(X)} & =\nabla_{j}\left(\alpha(X)\,H^{\alpha(X)-1}\,\nabla_{i}H+H^{\alpha(X)}\log H\,\alpha_{\beta}(X)\,X_{i}^{\beta}\right)\\
 & =\alpha\,H^{\alpha-1}\,\nabla_{j}\nabla_{i}H+\alpha_{\beta}\,X_{j}^{\beta}\,H^{\alpha-1}\,\nabla_{i}H\\
 & \quad+\alpha\,(\alpha-1)\,H^{\alpha-2}\,\nabla_{j}H\,\nabla_{i}H+\alpha\,H^{\alpha-1}\log(H)\,\alpha_{\beta}\,X_{j}^{\beta}\,\nabla_{i}H\\
 & \quad+\alpha\,H^{\alpha-1}\,\nabla_{j}H\log(H)\,\alpha_{\beta}\,X_{i}^{\beta}+H^{\alpha}\left(\log H\right)^{2}\,\alpha_{\gamma}\,X_{j}^{\gamma}\,\alpha_{\beta}\,X_{i}^{\beta}\\
 & \quad+H^{\alpha-1}\nabla_{j}H\,\alpha_{\beta}\,X_{i}^{\beta}+H^{\alpha}\log(H)\,\alpha_{\beta\gamma}\,X_{i}^{\beta}\,X_{j}^{\gamma}\\
 & \quad+H^{\alpha}\log(H)\,\alpha_{\beta}\,\nu^{\beta}\,h_{ij}\,.
\end{align*}
From (\ref{eq:h_ij dot}) we infer
\begin{equation}
\begin{split}\L h_{ij} & =-g^{kl}\,\nabla_{l}\nabla_{k}h_{ij}+\nabla_{j}\nabla_{i}H+(\alpha-1)\,H^{-1}\,\nabla_{i}H\,\nabla_{j}H\\
 & \quad+\left(\log(H)+\frac{1}{\alpha}\right)\left(\alpha_{\beta}\,X_{j}^{\beta}\,\nabla_{i}H+\alpha_{\beta}\,X_{i}^{\beta}\,\nabla_{j}H\right)\\
 & \quad+\frac{1}{\alpha}\,H\log(H)\,\left(\left(\alpha_{\beta\gamma}+\log(H)\,\alpha_{\beta}\,\alpha_{\gamma}\right)X_{i}^{\beta}\,X_{j}^{\gamma}+\partial_{\nu}\alpha\,h_{ij}\right)\\
 & \quad-\frac{1}{\alpha}H\,h_{j}^{k}\,h_{ik}\,.
\end{split}
\label{eq: h_ij no cancel}
\end{equation}
To cancel the second derivatives of $h_{ij}$ on the right hand side,
we will need the following identity:
\begin{align*}
\nabla_{j}\nabla_{i}h_{kl} & =\nabla_{j}\nabla_{l}h_{ik}=\nabla_{l}\nabla_{j}h_{ik}+{R_{jli}}^{a}\,h_{ak}+{R_{jlk}}^{a}\,h_{ia}\\
 & =\nabla_{l}\nabla_{j}h_{ik}+\left(h_{ij}\,h_{l}^{a}-h_{j}^{a}\,h_{li}\right)h_{ak}+\left(h_{jk}\,h_{l}^{a}-h_{j}^{a}\,h_{lk}\right)h_{ia}\\
 & =\nabla_{l}\nabla_{k}h_{ij}+h_{ij}\,h_{k}^{a}\,h_{al}-h_{il}\,h_{j}^{a}\,h_{ak}+h_{jk}\,h_{i}^{a}\,h_{al}-h_{kl}\,h_{i}^{a}\,h_{aj}\,.
\end{align*}
We apply this in the following way
\begin{align*}
\nabla_{j}\nabla_{i}H & =\nabla_{j}\nabla_{i}(g^{kl}\,h_{kl})=g^{kl}\,\nabla_{j}\nabla_{i}h_{kl}=g^{kl}\,\nabla_{l}\nabla_{k}h_{ij}+|A|^{2}\,h_{ij}-Hh_{i}^{a}\,h_{aj}\,.
\end{align*}
Inserting this into (\ref{eq: h_ij no cancel}) yields 
\[
\begin{split}\L h_{ij} & =|A|^{2}\,h_{ij}-\left(1+\frac{1}{\alpha}\right)H\,h_{i}^{k}\,h_{kj}+(\alpha-1)\,H^{-1}\,\nabla_{i}H\,\nabla_{j}H\\
 & \quad+\left(\log(H)+\frac{1}{\alpha}\right)\left(\alpha_{\beta}\,X_{j}^{\beta}\,\nabla_{i}H+\alpha_{\beta}\,X_{i}^{\beta}\,\nabla_{j}H\right)\\
 & \quad+\frac{1}{\alpha}\,H\log(H)\,\left(\left(\alpha_{\beta\gamma}+\log(H)\,\alpha_{\beta}\,\alpha_{\gamma}\right)X_{i}^{\beta}\,X_{j}^{\gamma}+\partial_{\nu}\alpha\,h_{ij}\right)\,.
\end{split}
\]

Finally, we turn to the evolution equation for $|A|^{2}=g^{ik}\,g^{jl}\,h_{ij}\,h_{kl}$.
\begin{align*}
\L|A|^{2} & =\frac{1}{\alpha}\,H\,\left(2\,h^{ik}\,g^{jl}\,h_{ij}\,h_{kl}+2\,g^{ik}\,h^{jl}\,h_{ij}\,h_{kl}\right)+\left(\L h_{ij}\right)h^{ij}+h^{kl}\,\L h_{kl}-2|\nabla A|^{2}\\
 & =2\,h^{ij}\left(\L h_{ij}\right)+4\frac{1}{\alpha}H\,h_{j}^{i}\,h_{k}^{j}\,h_{i}^{k}-2|\nabla A|^{2}\\
 & =-2\,|\nabla A|^{2}+2\,(\alpha-1)\,H^{-1}\,h^{ij}\,\nabla_{i}H\,\nabla_{j}H+2\,|A|^{4}+2\,\frac{1-\alpha}{\alpha}\,H\,h_{j}^{i}\,h_{k}^{j}\,h_{i}^{k}\\
 & \quad+2\,h^{ij}\left(\log(H)+\frac{1}{\alpha}\right)\left(\alpha_{\beta}\,X_{j}^{\beta}\,\nabla_{i}H+\alpha_{\beta}\,X_{i}^{\beta}\,\nabla_{j}H\right)\\
 & \quad+2\,h^{ij}\,\frac{1}{\alpha}\,H\log(H)\,\left(\left(\alpha_{\beta\gamma}+\log(H)\,\alpha_{\beta}\,\alpha_{\gamma}\right)X_{i}^{\beta}\,X_{j}^{\gamma}+\partial_{\nu}\alpha\,h_{ij}\right)\,.\qedhere
\end{align*}
\end{proof}

\section{Auxiliary problems\label{sec:Auxiliary-problems}}

The aim of this section is to provide the solutions of the approximation
problems. In a first attempt, one may wish to solve the initial boundary
value problem for the $H^{\alpha}$-flow ((\ref{eq H^alpha-flow graph}))
\begin{equation}
\begin{cases}
\partial_{t}u=\frac{1}{w}\,H^{\alpha}=\vu\left(\divergence\left(\frac{\nabla u}{\vu}\right)\right)^{\alpha} & \text{on }Q\times(0,T)\,,\\
u(x,t)=0 & \text{for }x\in\partial Q,\;t\in[0,T]\,,\\
u(\cdot,0)=u_{0}\,,
\end{cases}\label{eq ap wish}
\end{equation}
for $Q\subset\mathbb{R}^{n}$ open and bounded, $T>0$, and for (smooth)
initial data $u_{0}$ with $u_{0}|_{\partial Q}\equiv0$ and such
that the mean curvature of $\graph u_{0}$ is positive. However, this
initial boundary value problem is problematic for a number of reasons.
Firstly, the right hand side of the equation must vanish at the boundary.
So the mean curvature vanishes at the boundary. This destroys the
uniform parabolicity of the equation (unless $\alpha=1$). Secondly,
compatibility conditions are only satisfied for initial data $u_{0}$
with specific behavior near the boundary. On the more technical side,
we will need to prove $C^{2}$-estimates at the boundary for this
fully nonlinear equation. These are hard to work out for geometric
flows, especially if the homogeneity is different from one.

To bypass these problems we consider a different auxiliary problem.
Eventually, all we need of our auxiliary problem is that it has a
given $u_{0}$ as initial condition and that, below a certain given
height, the equation describes $H^{\alpha}$-flow. This can be accomplished
by a number of different initial boundary value problems and we do
not need to insist on (\ref{eq ap wish}). The route from (\ref{eq ap wish})
to a new initial boundary value problem that is easier to solve is
described now. To ensure the parabolicity of the equation at the boundary,
we introduce time dependent boundary values $u(x,t)=c\,t$ for $(x,t)\in\partial Q\times[0,T]$.
Moreover, we add a term to the right hand side of the equation that
depends on $u_{0}$ and will ensure compatibility conditions of any
order. Lastly, we make the equation quasilinear near the boundary
by introducing a position dependence for the exponent $\alpha$ such
that $\alpha\equiv1$ near the boundary.

We consider the initial boundary value problem 
\begin{equation}
\begin{cases}
\dot{u}=G(\nabla^{2}u,\nabla u,x)-\xi(x)\,(G_{0}(x)-c) & \text{on }Q\times(0,T)\,,\\
u(x,t)=c\,t & \text{for }x\in\partial Q,\;t\in[0,T]\,,\\
u(\cdot,0)=u_{0}\;.
\end{cases}\label{eq ap ap}
\end{equation}
We suppose: 
\begin{itemize}
\item $T>0$, and $Q\subset\mathbb{R}^{n}$ is a smooth, bounded, and open
domain. 
\item The operator is given by 
\[
\begin{split}G(\nabla^{2}u,\nabla u,x) & =\frac{1}{w(\nabla u)}H(\nabla^{2}u,\nabla u)^{\alpha(x)}\\
 & =\vu\left(\divergence\left(\frac{\nabla u}{\vu}\right)\right)^{\alpha(x)}\;.
\end{split}
\]
So the equation $\dot{u}=G(\nabla^{2}u,\nabla u,x)$ describes $H^{\alpha(x)}$-flow
for $\graph u(\cdot,t)$. 
\item $\alpha(x)$ is a smooth function on $Q$ with values in $(0,\infty)$
and which is constant $\alpha\equiv1$ in a neighborhood of the boundary
$\partial Q$. 
\item The initial datum $u_{0}$ is smooth on $\overline{Q}$ and satisfies
$u_{0}\le0$, $u_{0}|_{\partial Q}\equiv0$, and its graph has positive
mean curvature $H(\nabla^{2}u_{0},\nabla u_{0})>0$ on $\overline{Q}$. 
\item The constant $c>0$ is chosen in such a way that $G_{0}(x)\coloneqq G(\nabla^{2}u_{0},\nabla u_{0},x)\ge c$
holds. 
\item The function $\xi(x)$ is a smooth cut-off function with values in
$[0,1]$. We demand $\xi\equiv1$ in a neighborhood of the boundary
$\partial Q$ and that $\alpha\equiv1$ on a neighborhood of the support
of $\xi$. 
\end{itemize}
\begin{lem}
\label{lem ap} The initial boundary value problem (\ref{eq ap ap})
has a unique smooth solution, even for $T=\infty$. 
\end{lem}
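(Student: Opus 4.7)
The plan is the standard two-step strategy for a parabolic Cauchy-Dirichlet problem: short-time existence via parabolic PDE theory, then continuation to all $T<\infty$ by a priori estimates on each finite interval. The entire design of (\ref{eq ap ap})---time-dependent boundary data $ct$, the corrective term $-\xi(x)(G_{0}(x)-c)$, and $\alpha\equiv 1$ on a neighborhood of $\mathrm{supp}\,\xi\supset\partial Q$---is tailored precisely to make both steps tractable. First-order compatibility at $t=0$ holds because on $\partial Q$ we have $\dot u(x,0)=G_{0}(x)-(G_{0}(x)-c)=c=\partial_{t}(ct)$; higher-order compatibility is verified inductively by differentiating the equation in $t$ and using that in a neighborhood of $\partial Q$ the equation is quasilinear (mean curvature flow plus a smooth lower-order term), so each $\partial_{t}^{k}u(\cdot,0)|_{\partial Q}$ can be computed and matched to $\partial_{t}^{k}(ct)=0$ for $k\ge 2$.

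For short-time existence I would apply standard parabolic theory in parabolic Hölder spaces. Because $H(\nabla^{2}u_{0},\nabla u_{0})>0$ on $\overline{Q}$, the linearization of $G$ at $u_{0}$ is uniformly elliptic with smooth coefficients; a contraction/inverse-function-theorem argument (or Krylov's fully nonlinear short-time theorem, combined with the linear theory of Ladyzhenskaya-Solonnikov-Ural'tseva for the quasilinear boundary layer) yields a unique classical solution on a short interval, and Schauder bootstrapping gives smoothness. Uniqueness on any interval follows from the comparison principle applied to the difference of two candidate solutions. For continuation, suppose $[0,T^{*})$ is the maximal interval of existence with $T^{*}<\infty$; I would show uniform estimates up to $T^{*}$ that let one reopen the short-time result there. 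A $C^{0}$-bound comes from comparison with barriers such as $u_{0}+Ct$ and $ct+\max_{\overline Q}u_{0}$. Preservation of $H\ge c_{1}>0$ on $[0,T^{*}]$ follows from (\ref{eq: ee H^alpha}) and the maximum principle, the smooth $\partial_{\nu}\alpha$-term being lower-order. A gradient bound is obtained by applying the maximum principle to an expression built from $w=\langle\nu,e_{n+1}\rangle$ and the height, supplemented by boundary barriers. An interior $C^{2}$-bound follows from (\ref{eq: ee |A|^2}) by the maximum principle, while the boundary $C^{2}$-bound is provided by classical quasilinear theory since the equation is MCF (plus a smooth lower-order term) near $\partial Q$. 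With $H,|A|$ controlled, the equation becomes uniformly parabolic with smooth coefficients, and Krylov-Evans together with parabolic Schauder theory give $C^{k,\beta}$-estimates for all $k$.

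The step I expect to be the main obstacle is the $C^{2}$-estimate at the boundary: for genuinely fully nonlinear geometric flows with $\alpha\ne 1$ this is notoriously hard, and the engineering of (\ref{eq ap ap}) to be quasilinear on a neighborhood of $\partial Q$ is precisely what sidesteps it. The only remaining subtlety is to control the transition region where $\alpha$ varies from $1$ to its interior values---but since $\alpha$ is smooth and the $\alpha_{\beta}$, $\alpha_{\beta\gamma}$, $\partial_{\nu}\alpha$ contributions appearing in (\ref{eq: ee nu})--(\ref{eq: ee |A|^2}) are of strictly lower order in the curvature, they are absorbable by the maximum principle arguments above and do not obstruct any of the estimates.
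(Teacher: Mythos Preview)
Your two-step strategy and the treatment of compatibility, short-time existence, and the boundary $C^{2}$-estimate via quasilinear theory all match the paper. The substantive difference is in how the $H$-bounds and the gradient bound are obtained, and your route has a real gap there. The evolution equations (\ref{eq: ee nu})--(\ref{eq: ee |A|^2}) are derived for the pure flow $\dot X=H^{\alpha(X)}\nu$; on $\mathrm{supp}\,\xi$ the normal speed of the auxiliary problem is $H^{\alpha}-w\,\xi(G_{0}-c)$, so those formulas acquire extra terms you do not account for. More seriously, even off $\mathrm{supp}\,\xi$ the equation $\L H^{\alpha}=\bigl(|A|^{2}+\tfrac{1}{\alpha}H\log H\,\partial_{\nu}\alpha\bigr)H^{\alpha}$ does not by itself prevent $\min H^{\alpha}$ from decreasing (for small $H$ the $H\log H$ term dominates $|A|^{2}\ge H^{2}/n$), and to run a maximum principle you would also need lower control on $H$ along $\partial(\mathrm{supp}\,\xi)$, which the quasilinear $C^{2}$-theory supplies only as an upper bound on $|A|$, not a lower bound on $H$.

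The paper bypasses all of this with one observation you are missing: differentiate (\ref{eq ap ap}) in $t$. Since the inhomogeneity $-\xi(x)(G_{0}(x)-c)$ is time-independent, $\dot u$ satisfies the clean linear equation $\partial_{t}\dot u=G_{r_{ij}}\dot u_{ij}+G_{p_{i}}\dot u_{i}$ on all of $Q$, with boundary value $c$ and initial value $(1-\xi)G_{0}+\xi c\in[c,\sup G_{0}]$; the maximum principle gives $c\le\dot u\le\sup G_{0}$ globally. From $H^{\alpha}=w\bigl(\dot u+\xi(G_{0}-c)\bigr)$ one reads off the upper bound $H^{\alpha}\le 2\sup G_{0}$ immediately; the gradient bound is then obtained by differentiating (\ref{eq ap ap}) in $x^{k}$ and applying the maximum principle to $u_{k}\pm Ct$ (the upper $H^{\alpha}$-bound controls the inhomogeneity $G_{x^{k}}=w^{-1}H^{\alpha}\log H\,\partial_{k}\alpha$); and only then does the lower bound $H^{\alpha}\ge c\,\inf w>0$ follow. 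The interior $|A|^{2}$-estimate via (\ref{eq: ee |A|^2}) is applied only where $\xi\equiv 0$, with the maximum on $\mathrm{supp}\,\xi$ handled by the quasilinear boundary estimate. In short, the $\dot u$-trick is the device the auxiliary problem was engineered for, and it replaces your proposed use of (\ref{eq: ee H^alpha}).
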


\begin{proof}
The problem satisfies compatibility conditions of any order because
if one substitutes $u_{0}$ for $u$, then the right hand side of
the equation becomes equal to the constant $c$ in a neighborhood
of the boundary. This is perfectly compatible with the boundary condition
$u(x,t)=c\,t$ for any order of differentiation. For the sake of clarity,
we explicitly check the first compatibility conditions. For the zeroth
compatibility condition we fix $x\in\partial Q$ and we put $t=0$
in the second line of (\ref{eq ap ap}), which yields $u(x,0)=0$.
The third line in (\ref{eq ap ap}) yields $u(x,0)=u_{0}(x)=0$ because
$u_{0}\equiv0$ on $\partial Q$. So the two lines are compatible.
For the first compatibility condition one has to check the equation
(first line) where the time derivative is computed from the second
line and the spatial derivatives are derived from the third line.
The second line gives $\dot{u}(x,0)=c$. The third line ($u(\cdot,0)=u_{0}$)
yields 
\[
G(\nabla^{2}u,\nabla u,x)-\xi(x)(G_{0}(x)-c)=(1-\xi(x))G_{0}(x)+\xi(x)\,c=c
\]
at $t=0$ and for $x$ in the neighborhood of the boundary where $\xi\equiv1$.
In particular the first compatibility condition is satisfied. For
the second compatibility condition one takes a time derivative in
the equation. This gives 
\[
\ddot{u}=G_{r_{ij}}(\nabla^{2}u,\nabla u,x)\,\dot{u}_{ij}+G_{p_{i}}(\nabla^{2}u,\nabla u,x)\,\dot{u}_{i}\,.
\]
But we have already seen that $\dot{u}(\cdot,0)\equiv c$ where $\xi\equiv1$
when computed from $u_{0}$ using the equation. Therefore, the spatial
derivatives of $\dot{u}$ on the right-hand side vanish. As $\ddot{u}$
also vanishes when computed from $u(x,t)=c\,t$, the second compatibility
condition holds true too. The higher order conditions are similar.

To investigate the parabolicity properties of the equation, we differentiate
the right hand side with respect to $\nabla^{2}u$: 
\begin{equation}
\begin{split}\frac{\partial}{\partial r_{ij}}\bigg(G(r,p,x)-\xi(x)(G_{0}(x)-c)\bigg) & =\frac{1}{w(p)}\frac{\partial}{\partial r_{ij}}(H(r,p))^{\alpha(x)}\\
 & =\frac{\alpha\,H(r,p)^{\alpha(x)-1}}{w(p)}\frac{\partial}{\partial r_{ij}}H(r,p)\\
 & =\frac{\alpha\,H(r,p)^{\alpha(x)-1}}{w(p)}\left(\delta^{ij}-\frac{p^{i}\,p^{j}}{1+|p|^{2}}\right)
\end{split}
\label{eq: ap parabolicity}
\end{equation}
This positive definite matrix has both-sided bounds on its eigenvalues
if $p=\nabla u$ is bounded and there is a both-sided bound $C^{-1}\le H\le C$
for the mean curvature.

We are clearly going to need some a priori estimates. Let $u$ be
a solution of (\ref{eq ap ap}) with some $T>0$ which is not necessarily
that of the assertion of the lemma. Our goal now is to work through
the following program. 
\begin{enumerate}
\item $c\le\dot{u}\le\sup G_{0}$ \label{it dotu} 
\item $u_{0}(x)\le u(x,t)-c\,t\le0$ \label{it u} 
\item $\nabla u$ is uniformly bounded at the boundary $\partial Q$ \label{it nabla u partial Q} 
\item upper bound on $H^{\alpha(x)}$ \label{it ub H} 
\item interior gradient bound \label{it nabla u} 
\item positive lower bound on $H^{\alpha(x)}$ \label{it lb H} 
\item bound for the $C^{2;1}$-norm in a neighborhood of $\partial Q$ \label{it norm partial Q} 
\item estimation of $|A|^{2}$ corresponding to $\graph u$. \label{it |A|^2} 
\end{enumerate}

\paragraph{(\ref{it dotu})}

By differentiating the equation (\ref{eq ap ap}), we obtain 
\begin{equation}
\frac{\partial}{\partial t}\dot{u}=G_{r_{ij}}\dot{u}_{ij}+G_{p_{i}}\dot{u}_{i}\;,\label{eq ap dotu}
\end{equation}
a linear parabolic equation for $\dot{u}$. On the lateral boundary
$\partial Q\times[0,T]$, the boundary condition yields $\dot{u}\equiv c$.
On the bottom part $Q\times\{0\}$, we have $\dot{u}=(1-\xi)\,G_{0}+\xi\,c$.
So $\dot{u}$ attains values in the range $[c,\sup G_{0}]$ on the
parabolic boundary. According to the parabolic maximum principle,
$\dot{u}$ attains its minimum and maximum on the parabolic boundary.
Hence, we have $c\le\dot{u}\le\sup G_{0}$ on all of $\overline{Q}\times[0,T]$.

\paragraph{(\ref{it u})}

The inequality $c\le\dot{u}$ implies $u_{0}\le u(x,t)-c\,t$.

For $\varepsilon>0$, we consider the function $\overline{u}(x,t)\coloneqq c\,t+\varepsilon$.
On the parabolic boundary holds $u<\overline{u}$. Assume that at
some point $u(x,t)=\overline{u}(x,t)$ holds for the first time. Then
$\graph u(\cdot,t)$ touches a horizontal hyperplane from below. This
contradicts the hypothesis that $\graph u(\cdot,t)$ is mean convex.
So $u(x,t)<\overline{u}(x,t)$ holds on all of $\overline{Q}\times[0,T]$.
Because $\varepsilon>0$ is arbitrary, we infer $u\le c\,t$.

\paragraph{(\ref{it nabla u partial Q})}

Because of $u_{0}|_{\partial Q}\equiv0$, (\ref{it u}) immediately
induces the boundary gradient estimate $|\nabla u(x,t)|=|\nabla(u(x,t)-c\,t)|\le|\nabla u_{0}(x)|$
for $(x,t)\in\partial Q\times[0,T]$.

\paragraph{(\ref{it ub H})}

An upper bound for $H^{\alpha(x)}$ follows from (\ref{it dotu}):
\[
\begin{split}H^{\alpha(x)} & =w\,G=w\,\big(\dot{u}+\xi(G_{0}-c)\big)\le1\cdot\big(\sup G_{0}+1\cdot(G_{0}-0)\big)\\
 & \le2\,\sup G_{0}\;.
\end{split}
\]

\paragraph{(\ref{it nabla u})}

Taking a spatial derivative in (\ref{eq ap ap}) yields 
\begin{equation}
\dot{u}_{k}=G_{r_{ij}}\,u_{kij}+G_{p_{i}}\,u_{ki}+G_{x^{k}}-\xi_{k}\,(G_{0}-c)-\xi\,\partial_{k}G_{0}.\label{eq ap equation_k}
\end{equation}
We shall show that $G_{x^{k}}$ is bounded in a controlled way. It
holds 
\[
G_{x^{k}}=\frac{\partial}{\partial x^{k}}\,\frac{1}{w}\,H^{\alpha(x)}=\frac{1}{w}\,\log(H)\,H^{\alpha(x)}\,\frac{\partial\alpha}{\partial x}\;.
\]
Because $a^{\alpha}\,\log a\to0$ as $a\to0$ for any $\alpha>0$,
an upper bound for $|G_{x^{k}}|$ follows from the upper bound (\ref{it ub H})
for $H^{\alpha(x)}$ (of course the (least) upper bound for $\alpha$
enters too).

Let $C>0$ be a constant depending only on the data such that $|G_{x^{k}}-\xi_{k}\,(G_{0}-c)-\xi\,\partial_{k}G_{0}|<C$.
The maximum principle applied to equation (\ref{eq ap equation_k})
implies that $u_{k}-C\,t$ attains its maximum on the parabolic boundary
of $Q\times[0,T]$. In the same way $u_{k}+C\,t$ attains its minimum
on the parabolic boundary. There, on the parabolic boundary, we have
$|u_{k}|\le|\nabla u_{0}|$ by (\ref{it nabla u partial Q}). So we
obtain a ($T$-dependent) bound for $|\nabla u|$ on all of $Q\times[0,T]$.

\paragraph{(\ref{it lb H})}

From (\ref{it nabla u}) we have a controlled positive lower bound
for $w>0$. On the other hand, we have the lower bound $\dot{u}\ge c$
from (\ref{it dotu}). Combined, these give a lower bound for $H^{\alpha}(x)$:
\[
H^{\alpha(x)}=w\,\big(\dot{u}+\xi\,(G_{0}-c)\big)\ge w\,\dot{u}\ge c\,\inf w>0\;.
\]

\paragraph{(\ref{it norm partial Q})}

In a neighborhood of $\partial Q$, there holds $\alpha\equiv1$.
There, the equation is quasilinear as the right hand side of (\ref{eq: ap parabolicity})
does not depend on $r$ for $\alpha\equiv1$. From general regularity
theory of parabolic equations \cite[Chapter 6]{LSU}, we obtain from
$C^{1;1}$-estimates ((\ref{it dotu}),(\ref{it u}),(\ref{it nabla u}))
and estimates on the parabolicity constants, which follow from (\ref{it ub H}),(\ref{it nabla u}),(\ref{it lb H}),
local higher order estimates: in a first step $C^{1+\beta;\frac{1+\beta}{2}}$-estimates
and then all higher order estimates from the theory of Schauder. This
establishes local a priori estimates for all derivatives of $u$ in
some neighborhood of $\partial Q\times[0,T]$.

\paragraph{(\ref{it |A|^2})}

Now we turn to the estimation of $|A|^{2}$. We do calculations on
the time-dependent hypersurface which is given by the graph of $u(\cdot,t)$
and we choose a parametrization $X$ such that $\partial_{t}X$ points
in normal direction (cf.\ (\ref{eq H^alpha-flow nu})). Slightly
abusing notation, we extend $\alpha$ to a function on $\mathbb{R}^{n+1}$
by setting $\alpha(x^{1},\ldots,x^{n},x^{n+1})\equiv\alpha(x^{1},\ldots,x^{n})$.

We consider the quantity $f\coloneqq\log|A|^{2}-p\log(H^{\alpha(X)}-b)$
for a constant $b>0$ such that $b\le\frac{1}{2}H^{\alpha}\le b^{-1}$
and a constant $p>0$ to be chosen later. Finding a controlled upper
bound for $|A|^{2}$ is equivalent to finding a controlled upper bound
for $f$. The reason being that we have already established both-sided
bounds for $H^{\alpha(X)}$, i.e., we have control on $b$, and consequently
we have both-sided control on the second term in the expression for
$f$. So let $(p_{0},t_{0})$ be a maximal point of $f$. Our goal
is to prove that $f$ is controllably bounded at $(p_{0},t_{0})$.

If $(X^{1},\ldots,X^{n})|_{(p_{0},t_{0})}$ is in the support of $\xi$
then $f(p_{0},t_{0})$ is bounded by virtue of (\ref{it norm partial Q}).
Furthermore, $f(p_{0},t_{0})$ is controlled by $u_{0}$ if $t_{0}=0$.
Therefore, we may assume that $\big((X^{1},\ldots,X^{n})(p_{0},t_{0}),t_{0}\big)\in\left(Q\times(0,T]\right)\setminus\left(\mathrm{supp}\,\xi\times[0,T]\right)$
holds.

Because $p_{0}$ is an interior point, the maximality condition implies
that the first derivative vanishes. It follows 
\begin{equation}
\frac{\nabla|A|^{2}}{|A|^{2}}=p\,\frac{\nabla H^{\alpha(X)}}{H^{\alpha(X)}-b}\qquad\text{at }(p_{0},t_{0})\,.\label{eq apce nabla eq 0}
\end{equation}

We consider the differential operator 
\begin{equation}
\L\coloneqq\frac{\partial_{t}}{\alpha(X)\,H^{\alpha(X)-1}}-\Delta\,.\label{eq apce L}
\end{equation}
Here, $\Delta$ denotes the Laplace-Beltrami-Operator of the hypersurface
given by the graph of $u(\cdot,t)$.

Outside of the support of $\xi$, $u$ solves the graphical $H^{\alpha(X)}$-flow.
In particular, we may apply Lemma \ref{lem evol eq}. Because the
point $(p_{0},t_{0})$ is a parabolically interior point and because
of its maximality condition for $f=\log|A|^{2}-p\log(H^{\alpha}-b)$,
at this point holds 
\begin{equation}
\begin{split}0 & \le\L\left(\log|A|^{2}-p\log(H^{\alpha}-b)\right)\\
 & =\frac{\L|A|^{2}}{|A|^{2}}+\left|\frac{\nabla|A|^{2}}{|A|^{2}}\right|^{2}-p\,\frac{\L\big(H^{\alpha}-b\big)}{H^{\alpha}-b}-p\left|\frac{\nabla H^{\alpha}}{H^{\alpha}-b}\right|^{2}\\
 & =\frac{\L|A|^{2}}{|A|^{2}}-p\,\frac{\L\big(H^{\alpha}-b\big)}{H^{\alpha}-b}+\big(p^{2}-p\big)\left|\frac{\nabla H^{\alpha}}{H^{\alpha}-b}\right|^{2}
\end{split}
\label{eq: apce 0 le L}
\end{equation}
due to (\ref{eq apce nabla eq 0}). From (\ref{eq: ee H^alpha}),
(\ref{eq: ee |A|^2}), and (\ref{eq: apce 0 le L}) we obtain 
\begin{equation}
\begin{split}0\le & -2\,\frac{|\nabla A|^{2}}{|A|^{2}}+2\,|\alpha-1|\,\frac{|\nabla H|^{2}}{|A|\,H}\\
 & +4\,\frac{1}{\alpha}\,(1+\alpha\,|\log H|)\,\frac{1}{|A|}\,|\mathrm{D}\alpha|\,|\nabla H|+\big(p^{2}-p\big)\left|\frac{\nabla H^{\alpha}}{H^{\alpha}-b}\right|^{2}\\
 & +\left(2-p\,\frac{H^{\alpha}}{H^{\alpha}-b}\right)\left(|A|^{2}+\frac{1}{\alpha}\,H\log(H)\,\partial_{\nu}\alpha\right)+2\,\frac{|1-\alpha|}{\alpha}\,H\,|A|\\
 & +\frac{2}{\alpha}\,\frac{H}{|A|}\,|\log(H)|\,\left(|\mathrm{D}^{2}\alpha|+|\log H|\,|\mathrm{D}\alpha|^{2}\right)\;.
\end{split}
\label{eq: apce i}
\end{equation}

We have already established upper and lower bounds for $H$. The exponent-function
$\alpha\colon\mathbb{R}^{n+1}\to\mathbb{R}$ is fixed and it is, together
with its derivatives, $\mathrm{D}\alpha$ and $\mathrm{D}^{2}\alpha$,
to be seen as controlled. Moreover, we may assume for an arbitrary,
but fixed, $\delta>0$ that $H\le\delta\,|A|$. Otherwise, $|A|$
is controlled by $H$ (and $\delta$) and the desired curvature estimate
would follow. Therefore, we conclude from (\ref{eq: apce i}) for
a universal constant $C>0$ 
\begin{equation}
\begin{split}0\le & -2\,\frac{|\nabla A|^{2}}{|A|^{2}}+2\,|\alpha-1|\,\delta\,\frac{|\nabla H|^{2}}{H^{2}}\\
 & +4\,\frac{1}{\alpha}\,(1+\alpha\,|\log H|)\,\frac{1}{|A|}\,|\mathrm{D}\alpha|\,|\nabla H|+\big(p^{2}-p\big)\left|\frac{\nabla H^{\alpha}}{H^{\alpha}-b}\right|^{2}\\
 & +\left(2-p\,\frac{H^{\alpha}}{H^{\alpha}-b}+2\,\frac{|1-\alpha|}{\alpha}\,\delta\right)|A|^{2}+C\;.
\end{split}
\label{eq: apce after delta}
\end{equation}
Thereon we apply the following results: 
\begin{align*}
 & |\nabla|A|^{2}|^{2}=|2\,h^{ij}\,\nabla h_{ij}|^{2}\le4\,|A|^{2}\,|\nabla A|^{2}\\
 & \Rightarrow-2\,\frac{|\nabla A|^{2}}{|A|^{2}}\le-\frac{1}{2}\left|\frac{\nabla|A|^{2}}{|A|^{2}}\right|^{2}=-\frac{p^{2}}{2}\left|\frac{\nabla H^{\alpha}}{H^{\alpha}-b}\right|^{2}\,,\\
 & \frac{\nabla H^{\alpha}}{H^{\alpha}}=\nabla\log H^{\alpha}=\alpha\,\nabla\log H+\log(H)\,\nabla\alpha=\alpha\,\frac{\nabla H}{H}+\log(H)\,\nabla\alpha\\
 & \Rightarrow\frac{|\nabla H|^{2}}{H^{2}}\le C\left|\frac{\nabla H^{\alpha}}{H^{\alpha}}\right|^{2}+C\le C\,\left|\frac{\nabla H^{\alpha}}{H^{\alpha}-b}\right|^{2}+C\,,\\
 & \frac{1}{|A|}\,|\mathrm{D}\alpha|\,|\nabla H|\le C\,|\mathrm{D}\alpha|^{2}+\frac{|\nabla H|^{2}}{|A|^{2}}\le C+\delta^{2}\,\frac{|\nabla H|^{2}}{H^{2}}\le C+\delta^{2}\,C\left|\frac{\nabla H^{\alpha}}{H^{\alpha}-b}\right|^{2}\,.
\end{align*}
With these three inequalities we obtain from (\ref{eq: apce after delta})
\begin{equation}
\begin{split}0\le & \left(-\frac{1}{2}p^{2}+p^{2}-p+2\,|\alpha-1|\,\delta\,C+\delta^{2}\,C\right)\left|\frac{\nabla H^{\alpha}}{H^{\alpha}-b}\right|^{2}\\
 & +\left(2-p\,\frac{H^{\alpha}}{H^{\alpha}-b}+2\,\frac{|1-\alpha|}{\alpha}\,\delta\right)|A|^{2}+C\;.
\end{split}
\label{eq: apce after 3 ineq}
\end{equation}
Now we set $p$ to $p=2-\frac{b^{2}}{2}$. Then, using $b\le\frac{1}{2}H^{\alpha}\le b^{-1}$,
\[
\begin{split}2-p\,\frac{H^{\alpha}}{H^{\alpha}-b} & =\frac{(2-p)H^{\alpha}-2b}{H^{\alpha}-b}=\frac{\frac{b^{2}}{2}H^{\alpha}-2b}{H^{\alpha}-b}\\
 & \le\frac{b-2b}{H^{\alpha}-b}=\frac{-b}{H^{\alpha}-b}\le\frac{-b}{2b^{-1}}=-\frac{b^{2}}{2}\;.
\end{split}
\]
Because of $p<2$, $-p+\frac{1}{2}p^{2}<0$ holds. If we choose $\delta>0$
sufficiently small, but still in a controlled fashion, we can conclude
from (\ref{eq: apce after 3 ineq}) 
\[
0\le0\cdot\left|\frac{\nabla H^{\alpha}}{H^{\alpha}-b}\right|^{2}-\frac{b^{2}}{4}\cdot|A|^{2}+C\;.
\]
This demonstrates that $|A|^{2}\le C$ holds at the point $(p_{0},t_{0})$.
Of course, this implies that $f(p_{0},t_{0})$ is bounded (with control).
To show this was our goal.

\paragraph{Conclusion of the proof of Lemma \ref{lem ap}.}

The estimates (\ref{it nabla u}), (\ref{it ub H}), and (\ref{it lb H})
ensure that the equation is uniformly parabolic along any solution
with a priori bounds on the parabolicity constants.

Following the short time existence proof in \cite{Gerhardt} yields
a smooth solution to (\ref{eq ap ap}) for some time. The same argument
shows that for the maximal time interval $[0,T)$ the estimates for
$u(\cdot,t)$ degenerate as $t\to T$. To prove long time existence
($T=\infty$), it remains to exclude this behavior.

We have already proven $C^{2;1}$-estimates for $u$. Using these,
we can infer a $C^{\beta;\frac{\beta}{2}}$-estimate for $\dot{u}$
from (\ref{eq ap dotu}) through Krylov-Safonov estimates: We can
apply in our case the interior estimate of \cite{TW}. (Steps 1 and
2 of their proof are sufficient for our purpose. These consist of
applying the linear Krylov-Safonov estimate to equation (\ref{eq ap dotu})
and then using theory for, in our case, quasilinear elliptic equations
for spatial $C^{2,\beta}$-estimates on $u(\cdot,t)$. In a second
step, classical elliptic Schauder theory, applied to $(t_{1}-t_{0})^{-\beta/4}\,(u(\cdot,t_{1})-u(\cdot,t_{0}))$,
yields Hölder estimates in time for $\nabla^{2}u(\cdot,t)$. Cf.\ \cite{TW}
for more details.) In a neighborhood of the boundary, estimates follow
from \cite{LSU} because the equation is quasilinear there, as we
have discussed above in (\ref{it norm partial Q}).

With the $C^{2+\beta;\frac{2+\beta}{2}}$ estimates for $u$, we can
start a bootstrapping argument using Schauder estimates and conclude
estimates for any derivatives of $u$. Thus, we have achieved long
time existence.

Uniqueness of the solution follows from the maximum principle.
\end{proof}

\section{A priori estimates\label{sec:A-priori-estimates}}

We consider strictly mean convex, graphical hypersurfaces $M_{t}=\graph u(\cdot,t)$
in this section. We assume that $u\ge0$ so that $M_{t}$ lies in
the upper half-space $\{x\in\mathbb{R}^{n+1}\colon x^{n+1}\ge0\}$.
For a fixed chosen height $a\in\mathbb{R}$, we suppose that $\{x\colon u(x,t)\le a\}$
is compact for all times $t$ and that $u$ solves (\ref{eq H^alpha-flow graph})
on $\{(x,t)\colon u(x,t)<a+1\}$, i.e., the part of $M_{t}$ below
height $a+1$ moves by the $H^{\alpha}$-flow. How $M_{t}$ behaves
above that height is irrelevant because we will use a cut-off function
which vanishes above height $a$.

Let $X(\cdot,t)$ be a parametrization of $M_{t}$ with a suitable
(time dependent) parameter space such that (\ref{eq H^alpha-flow nu})
holds. We will use this parametrization throughout this section.

We prove the estimates via a maximum principle type argument. The
general strategy pursued here is to multiply the quantity to be estimated
with a localization function. From there, a test function is constructed.
If one applies a parabolic operator to a function, then the result
has a certain sign at parabolically interior, minimal or maximal points
of the function. By using a suitable parabolic operator this can be
exploited to bound the test function if it has been chosen adequately.
In a last step one recovers from the bound on the test function a
local estimate for the quantity in question. A suitable linear parabolic
differential operator is given in our case by 
\begin{equation}
\L\coloneqq\frac{1}{\alpha\,H^{\alpha-1}}\,\partial_{t}-\Delta_{M_{t}}\,.
\end{equation}

We will again use 
\begin{equation}
w\coloneqq\left\langle \nu,e_{n+1}\right\rangle .
\end{equation}

\begin{lem}
\label{lem evol eq} On $\{(p,t)\colon X^{n+1}(p,t)\le a\}$ hold
\begin{align}
\L X^{n+1} & =\frac{1-\alpha}{\alpha}\,H\,w\,,\label{eq evol w}\\
\L w & =|A|^{2}\,w\,,\\
\L H^{\alpha} & =|A|^{2}\,H^{\alpha}\,,\\
\begin{split}\L|A|^{2} & =-2\,|\nabla A|^{2}+2\,(\alpha-1)\,H^{-1}\,h^{kl}\,\nabla_{k}H\,\nabla_{l}H\\
 & \quad+2\,|A|^{4}+2\,\frac{1-\alpha}{\alpha}\,H\,h_{j}^{i}\,h_{k}^{j}\,h_{i}^{k}\,.
\end{split}
\label{eq evol |A|^2}
\end{align}
\end{lem}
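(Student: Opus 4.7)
\medskip

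\noindent\textbf{Plan.} The four equations are really just the specialization of the general evolution equations of the earlier lemma (with position-dependent exponent $\alpha(X)$) to the case of a genuinely constant exponent $\alpha$. In the region $\{X^{n+1}\le a\}$, which sits inside $\{X^{n+1}<a+1\}$ where $u$ satisfies (\ref{eq H^alpha-flow graph}) with the constant $\alpha$ of this section, the parametrization $X(\cdot,t)$ satisfies (\ref{eq H^alpha-flow nu}) with $\alpha$ constant. So the hypotheses of the earlier lemma apply and, moreover, all derivatives of $\alpha$ — namely $\alpha_\beta$, $\alpha_{\beta\gamma}$ and $\partial_\nu\alpha$ — vanish identically at every point where the formulas are to be evaluated.

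\medskip

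\noindent\textbf{Execution.} I would simply write down each of the four equations by plugging $\alpha_\beta=\alpha_{\beta\gamma}=\partial_\nu\alpha=0$ into the four formulas (\ref{eq: ee X}), (\ref{eq: ee nu}), (\ref{eq: ee H^alpha}), (\ref{eq: ee |A|^2}) of the earlier lemma. This immediately gives (\ref{eq evol |A|^2}) and $\L H^\alpha=|A|^2\,H^\alpha$, and it reduces the vector-valued identities to $\L X=\left(\tfrac1\alpha-1\right)H\nu$ and $\L\nu=|A|^2\nu$. To pass from these to the two scalar evolution equations in the statement, I would use that $e_{n+1}\in\R^{n+1}$ is constant, so both $\partial_t$ and, component-wise, $\Delta_{M_t}$ commute with $\langle\,\cdot\,,e_{n+1}\rangle$; hence $\L$ commutes with this pairing. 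Taking the inner product with $e_{n+1}$ in the identity for $\L X$ and using $X^{n+1}=\langle X,e_{n+1}\rangle$, $w=\langle\nu,e_{n+1}\rangle$, and $\tfrac1\alpha-1=\tfrac{1-\alpha}{\alpha}$, yields (\ref{eq evol w}); doing the same in the identity for $\L\nu$ gives $\L w=|A|^2 w$.

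\medskip

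\noindent\textbf{Expected difficulty.} There is essentially no obstacle; the lemma is a bookkeeping corollary of the previous one. The only points that merit a line of justification are (i) that in the region of interest the flow is genuinely $H^\alpha$-flow with $\alpha$ constant (so that one may legitimately set the $\alpha$-derivative terms to zero), and (ii) the commutation of $\L$ with $\langle\,\cdot\,,e_{n+1}\rangle$, which rests on nothing more than $e_{n+1}$ being parallel in the ambient $\R^{n+1}$ and the Laplace-Beltrami operator acting component-wise on vector-valued functions.
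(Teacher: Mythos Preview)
Your proposal is correct and is exactly the paper's approach: the paper's proof is the single line ``Follows directly from'' the earlier lemma, i.e.\ specialize (\ref{eq: ee X})--(\ref{eq: ee |A|^2}) to constant $\alpha$ and pair with $e_{n+1}$ for the first two equations. Your added remarks about why $\L$ commutes with $\langle\,\cdot\,,e_{n+1}\rangle$ and why the region in question carries the genuine $H^\alpha$-flow are welcome clarifications that the paper leaves implicit.
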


\begin{proof}
Follows directly from Lemma \ref{lem evol eq}.
\end{proof}
For the computations the following identity for functions $\varphi>0$
is helpful 
\begin{equation}
\begin{split}\L\log\varphi & =\frac{1}{\alpha\,H^{\alpha-1}}\,\frac{\partial_{t}\varphi}{\varphi}-g^{ij}\,\nabla_{i}\frac{\nabla_{j}\varphi}{\varphi}=\frac{1}{\alpha\,H^{\alpha-1}}\,\frac{\partial_{t}\varphi}{\varphi}-g^{ij}\left(\frac{\nabla_{i}\nabla_{j}\varphi}{\varphi}-\frac{\nabla_{i}\varphi\,\nabla_{j}\varphi}{\varphi^{2}}\right)\\
 & =\frac{\L\varphi}{\varphi}+\left|\frac{\nabla\varphi}{\varphi}\right|^{2}\,.
\end{split}
\label{eq evol log}
\end{equation}

Before we start estimating, we still have to define the cut-off function.
For a parameter $b\ge0$ and $a$ from above it is of the form 
\begin{equation}
\psi\coloneqq\left(a-b\,t-X^{n+1}\right)_{+}\,.
\end{equation}
By assumption, $\{p\colon\psi(p,t)>0\}$ is compact for any $t\ge0$.
Where $\psi>0$, 
\begin{equation}
\L\log\psi=\frac{\L\psi}{\psi}+\left|\frac{\nabla\psi}{\psi}\right|^{2}=-\frac{1-\alpha}{\alpha}\,H\,w\,\psi^{-1}-\frac{b}{\alpha\,H^{\alpha-1}}\,\psi^{-1}+\left|\frac{\nabla\psi}{\psi}\right|^{2}\label{eq evol psi}
\end{equation}
holds.

The usage of the height function $X^{n+1}$ is the most frequently
applied localization method in the study of curvature flows without
singularities. The idea to add the time dependent term $b\,t$ has
been adopted from \cite{CD,CDKL}.

\paragraph{Gradient bound.}

Because of $w\equiv(1+|\nabla u|^{2})^{-1/2}$, an upper bound for
$|\nabla u|$ is equivalent to a lower positive bound for $w$. We
have 
\begin{equation}
\psi^{-1}\,w\ge\begin{cases}
\inf\limits _{t=0}\psi^{-1}\,w & \alpha\le1\,,\\
\min\left\{ \inf\limits _{t=0}\psi^{-1}\,w,\;\frac{\alpha}{n\,(\alpha-1)},\;\frac{b}{a\,(\alpha-1)}\right\}  & \alpha>1\,.
\end{cases}\label{eq gradient bound}
\end{equation}

\begin{proof}
We work on the set $\{\psi>0\}$. By assumption, $\{p\colon X^{n+1}(p,t)\le a-b\,t\}$
is compact for any $t$. Hence, $f\coloneqq\log\psi^{-1}\,w=\log w-\log\psi$
attains a minimum on any compact time interval $[0,T]$. It suffices
to prove (\ref{eq gradient bound}) for times in $[0,T]$ if $T$
is kept arbitrary. Let the minimum be attained at $(p_{0},t_{0})$.
Note that $(p_{0},t_{0})$ is also a minimum point of $\psi^{-1}\,w$.
Because of this, it suffices to prove (\ref{eq gradient bound}) at
this point $(p_{0},t_{0})$.

If $t_{0}=0$, the assertion (\ref{eq gradient bound}) follows.

If $t_{0}>0$ and taking into account that $f\to\infty$ as $p\to\partial\{p\colon X^{n+1}(p,t)\le a-b\,t\}$,
and hence that $p_{0}$ is in the interior, $(p_{0},t_{0})$ is a
parabolically interior minimum point. We infer $\nabla f=0$ as well
as the differential inequality $\L f\le0$ at $(p_{0},t_{0})$. The
condition $\nabla f=0$ gives 
\begin{equation}
\frac{\nabla w}{w}=\frac{\nabla\psi}{\psi}\quad\text{at }(p_{0},t_{0})\;.\label{eq gb first condition}
\end{equation}
The other condition yields by virtue of (\ref{eq evol w}), (\ref{eq evol log}),
(\ref{eq evol psi}), and (\ref{eq gb first condition}) at $(p_{0},t_{0})$
\[
\begin{split}0 & \ge\L f=\L\log w-\L\log\psi\\
 & =|A|^{2}+\left|\frac{\nabla w}{w}\right|^{2}+\frac{1-\alpha}{\alpha}\,H\,w\,\psi^{-1}+\frac{b}{\alpha\,H^{\alpha-1}}\,\psi^{-1}-\left|\frac{\nabla\psi}{\psi}\right|^{2}\\
 & =|A|^{2}+\frac{1-\alpha}{\alpha}\,H\,w\,\psi^{-1}+\frac{b}{\alpha\,H^{\alpha-1}}\,\psi^{-1}\,.
\end{split}
\]
In the case $\alpha\le1$, this is impossible, and hence we must have
$t_{0}=0$, and (\ref{eq gradient bound}) follows for this case.

In the case $\alpha>1$, we use $|A|^{2}\ge\frac{1}{n}H^{2}$ and
$\psi\le a$ to obtain at $(p_{0},t_{0})$ 
\[
\psi^{-1}\,w\ge\frac{\alpha}{n\,(\alpha-1)}\,H+\frac{b}{a\,(\alpha-1)}\,H^{-\alpha}\ge\begin{cases}
\frac{\alpha}{n\,(\alpha-1)} & H\ge1\,,\\
\frac{b}{a\,(\alpha-1)} & H\le1\,.
\end{cases}
\]
This establishes (\ref{eq gradient bound}) at $(p_{0},t_{0})$, which
we noted to be sufficient.
\end{proof}

\paragraph{Lower bound on $H^{\alpha}$.}

\begin{equation}
\psi^{-1}\,H^{\alpha}\ge\begin{cases}
\inf_{t=0}\psi^{-1}\,H^{\alpha} & \alpha\le1\;,\\
\min\left\{ \inf\limits _{t=0}\psi^{-1}\,H^{\alpha},\;\frac{b}{a\,(\alpha-1)}\right\}  & \alpha>1\;.
\end{cases}\label{eq lower bound H}
\end{equation}

\begin{proof}
Again, it suffices to prove the estimate (\ref{eq lower bound H})
on time intervals $[0,T]$ where the end point $T$ is kept arbitrary.
Let $(p_{0},t_{0})$ be the minimal point of $\psi^{-1}\,H^{\alpha}$
over those times. If $t_{0}=0$, (\ref{eq lower bound H}) follows.
So let us assume $t_{0}>0$. Because $\psi$ is a localization function
(in particular it vanishes towards the boundary of the set $\{\psi>0\}$,
which we consider), we can conclude from $t_{0}>0$ that $(p_{0},t_{0})$
is a parabolically interior point. The point $(p_{0},t_{0})$ also
is a minimal point for $\log H^{\alpha}-\log\psi$. Hence, it holds
at $(p_{0},t_{0})$ 
\[
\frac{\nabla H^{\alpha}}{H^{\alpha}}=\frac{\nabla\psi}{\psi}
\]
and 
\[
\begin{split}0 & \ge\L(\log H^{\alpha}-\log\psi)=\frac{\L H^{\alpha}}{H^{\alpha}}-\frac{\L\psi}{\psi}+0\\
 & =|A|^{2}+\frac{1-\alpha}{\alpha}\,H\,w\,\psi^{-1}+\frac{b}{\alpha\,H^{\alpha-1}}\,\psi^{-1}\\
 & \ge+\frac{1-\alpha}{\alpha}\,H\,w\,\psi^{-1}+\frac{b}{\alpha\,H^{\alpha-1}}\,\psi^{-1}\;.
\end{split}
\]
For $\alpha\le1$ this is impossible, and (\ref{eq lower bound H})
follows in this case. If $\alpha>1$, we use $w\le1$ and rearrange
to 
\begin{equation}
\psi^{-1}\,H^{\alpha}\ge\frac{b}{\alpha-1}\,\psi^{-1}\ge\frac{b}{a\,(\alpha-1)}\;,
\end{equation}
and (\ref{eq lower bound H}) holds in this case, too.
\end{proof}

\paragraph{Upper bound on $H$.}

From now on, $b=0$ is possible and we will henceforth choose it that
way.

\global\long\def\uw{\underline{w}}%
 We assume a gradient bound: $w\ge2\,\uw$ on the set $\{(p,t)\colon\psi(p,t)>0\}$
where $\uw>0$ is a constant. In what follows, constants may depend
on $\uw$ and consequently depend on the gradient bound.

The following localized bound for $H$ holds: 
\begin{equation}
\psi\,H\le C(n,\alpha,a,\uw)\,\max\left\{ \sup_{t=0}\psi\,H,\;1\right\} \,.\label{eq upper bound H}
\end{equation}

\begin{proof}
Without loss of generality, we may only consider the finite time interval
$[0,T]$ for an arbitrary, but fixed $T>0$.

We define the test function 
\[
f\coloneqq\log H^{\alpha}+\alpha\log\psi-\log(w-\uw)\,.
\]
Let $(p_{0},t_{0})$ be a maximal point of $f$ over this time interval.
If $t_{0}=0$, then $f\le\sup_{t=0}f$. By exponentiation, this translates
to 
\[
\psi^{\alpha}\,H^{\alpha}\le(w-\uw)\,\sup_{t=0}\frac{\psi^{\alpha}\,H^{\alpha}}{w-\uw}\le\frac{1}{\uw}\,\sup_{t=0}\psi^{\alpha}\,H^{\alpha}\,,
\]
where we have used $\uw\le w-\uw\le1$. This shows (\ref{eq upper bound H})
in the case $t_{0}=0$.

Suppose $t_{0}>0$. Then $(p_{0},t_{0})$ is a parabolically interior
point because $\psi$ is a localization function. At this maximal
point $(p_{0},t_{0})$, there hold $\nabla f=0$ and $\L f\ge0$,
which respectively amount to 
\begin{equation}
\frac{\nabla H^{\alpha}}{H^{\alpha}}=-\alpha\,\frac{\nabla\psi}{\psi}+\frac{\nabla w}{w-\uw}\label{eq ubH first cond}
\end{equation}
and 
\begin{equation}
\begin{split}0 & \le\frac{\L H^{\alpha}}{H^{\alpha}}+\left|\frac{\nabla H^{\alpha}}{H^{\alpha}}\right|^{2}+\alpha\,\L\log\psi-\frac{\L w}{w-\uw}-\left|\frac{\nabla w}{w-\uw}\right|^{2}\\
 & =|A|^{2}+\left|\frac{\nabla H^{\alpha}}{H^{\alpha}}\right|^{2}+(\alpha-1)\,H\,w\,\psi^{-1}+\alpha\left|\frac{\nabla\psi}{\psi}\right|^{2}-\frac{w}{w-\uw}|A|^{2}-\left|\frac{\nabla w}{w-\uw}\right|^{2}\\
 & =-\frac{\uw}{w-\uw}|A|^{2}+(\alpha-1)\,H\,w\,\psi^{-1}+\left|\frac{\nabla H^{\alpha}}{H^{\alpha}}\right|^{2}+\alpha\left|\frac{\nabla\psi}{\psi}\right|^{2}-\left|\frac{\nabla w}{w-\uw}\right|^{2}\,.
\end{split}
\label{eq ubH second cond}
\end{equation}

From (\ref{eq ubH first cond}) we can deduce for $\varepsilon>0$
\begin{equation}
\left|\frac{\nabla H^{\alpha}}{H^{\alpha}}\right|^{2}\le(1+\varepsilon^{-1})\,\alpha^{2}\left|\frac{\nabla\psi}{\psi}\right|^{2}+(1+\varepsilon)\left|\frac{\nabla w}{w-\uw}\right|^{2}\,.\label{eq ubH Young}
\end{equation}
Furthermore, the following hold 
\begin{align}
|\nabla\psi|^{2} & =|\nabla X^{n+1}|^{2}\le1\,,\label{eq ubH nabla psi}\\
\nabla_{i}w & =-\nabla_{i}\nu^{n+1}=-h_{i}^{j}\,\nabla_{j}X^{n+1}\,,\\
|\nabla w|^{2} & \le|A|^{2}\,|\nabla X^{n+1}|^{2}\le|A|^{2}\,.\label{eq ubH nabla w}
\end{align}
By (\ref{eq ubH second cond}), (\ref{eq ubH Young}), (\ref{eq ubH nabla w}),
(\ref{eq ubH nabla psi}), and $w-\uw\ge\uw$, 
\[
\begin{split}0 & \le-\frac{\uw}{w-\uw}|A|^{2}+(\alpha-1)\,H\,w\,\psi^{-1}+\big(\alpha+(1+\varepsilon^{-1})\,\alpha^{2}\big)\left|\frac{\nabla\psi}{\psi}\right|^{2}+\varepsilon\left|\frac{\nabla w}{w-\uw}\right|^{2}\\
 & \le-\frac{\uw-\varepsilon/\uw}{w-\uw}\,|A|^{2}+(\alpha-1)\,H\,w\,\psi^{-1}+\big(\alpha+(1+\varepsilon^{-1})\,\alpha^{2}\big)\,\psi^{-2}\,.
\end{split}
\]
If we set $\varepsilon\coloneqq\frac{1}{2}\,\uw^{2}$ and use $|A|^{2}\ge\frac{1}{n}H^{2}$,
$w-\uw\le1$, and $w\le1$, we obtain 
\[
0\le-\frac{1}{2}\,\uw\,\frac{H^{2}}{n}+C\,H\,\psi^{-1}+C\,\psi^{-2}\,,
\]
where the constant $C>0$ depends on $\alpha$ and on $\uw$. Rearranging
and adjusting the constant, which may now also depend on $n$, yields
\[
(\psi\,H)^{2}\le C\,(\psi\,H+1)\,.
\]
It is now easy to see that $\psi\,H$ is bounded in a controlled fashion
at $(p_{0},t_{0})$, the point at which our calculations take place,
and which is the maximal point of our test function $f$. Put together
with the previous case of $t_{0}=0$, we conclude (by considering
$\exp f$) 
\[
\psi^{\alpha}\,H^{\alpha}\,(w-\uw)^{-1}\le\max\left\{ \sup_{t=0}\psi^{\alpha}\,H^{\alpha}\,(w-\uw)^{-1},\;C\right\} \,.
\]
Because of $\uw\le w-\uw\le1$, the assertion (\ref{eq upper bound H})
easily follows from here.
\end{proof}

\paragraph{Curvature bound.}

We assume the following both-sided bound on $H^{\alpha}$: 
\begin{equation}
0<c\le\frac{1}{2}\,H^{\alpha}\le c^{-1}\;.\label{eq cb control H}
\end{equation}
Then, depending on this control for $H^{\alpha}$ and the control
on the gradient, the following localized curvature bound holds: 
\begin{equation}
\psi\,|A|\le C(c,\uw,a,\alpha)\,\max\left\{ \sup_{t=0}\psi\,|A|,\,1\right\} \,.\label{eq curvature bound}
\end{equation}

\begin{proof}
Our strategy is the same as with the other estimates before. This
time, we consider the test function 
\[
f\coloneqq\log|A|^{2}+2\log\psi-\beta\,\log(H^{\alpha}-c)\,,
\]
where we are going to choose $\beta>0$ later.

First of all, we record an inequality we are going to use: 
\begin{align}
 & |\nabla|A|^{2}|^{2}=|2\,h^{ij}\,\nabla h_{ij}|^{2}\le4\,|A|^{2}\,|\nabla A|^{2}\nonumber \\
\Longrightarrow\quad & -2\,\frac{|\nabla A|^{2}}{|A|^{2}}\le-\frac{1}{2}\,\left|\frac{\nabla|A|^{2}}{|A|^{2}}\right|^{2}\,.\label{eq cb nabla A}
\end{align}
By (\ref{eq evol |A|^2}), (\ref{eq evol log}), and (\ref{eq cb nabla A}),
\[
\begin{split}\L\log|A|^{2} & =\frac{\L|A|^{2}}{|A|^{2}}+\left|\frac{\nabla|A|^{2}}{|A|^{2}}\right|^{2}\\
 & =-2\,\frac{|\nabla A|^{2}}{|A|^{2}}+2\,(\alpha-1)\,H^{-1}\,h^{kl}\,\nabla_{k}H\,\nabla_{l}H\,|A|^{-2}\\
 & \quad+2\,|A|^{2}+2\,\frac{1-\alpha}{\alpha}\,H\,h_{j}^{i}\,h_{k}^{j}\,h_{i}^{k}\,|A|^{-2}+\left|\frac{\nabla|A|^{2}}{|A|^{2}}\right|^{2}\\
 & \le\frac{1}{2}\left|\frac{\nabla|A|^{2}}{|A|^{2}}\right|^{2}+2\,|\alpha-1|\,(H\,|A|)^{-1}\,|\nabla H|^{2}+2\,|A|^{2}+2\,\frac{|\alpha-1|}{\alpha}\,H\,|A|\,.
\end{split}
\]

In a parabolically interior, first maximal point $(p_{0},t_{0})$
of $f$, $\L f\ge0$ holds: 
\begin{equation}
\begin{split}0 & \le\L f=\L\log|A|^{2}+2\,\L\log\psi-\beta\L\log(H^{\alpha}-c)\\
 & \le\frac{1}{2}\left|\frac{\nabla|A|^{2}}{|A|^{2}}\right|^{2}+2\,|\alpha-1|\,(H\,|A|)^{-1}\,|\nabla H|^{2}+2\,|A|^{2}+2\,\frac{|\alpha-1|}{\alpha}\,H\,|A|\\
 & \quad+2\,\frac{|\alpha-1|}{\alpha}\,H\,w\,\psi^{-1}+2\left|\frac{\nabla\psi}{\psi}\right|^{2}-\beta\,\frac{H^{\alpha}}{H^{\alpha}-c}\,|A|^{2}-\beta\,\left|\frac{\nabla H^{\alpha}}{H^{\alpha}-c}\right|^{2}\,.
\end{split}
\label{eq cb Lf ge 0}
\end{equation}
The vanishing of $\nabla f$ at $(p_{0},t_{0})$ yields 
\begin{equation}
\begin{split}\left|\frac{\nabla|A|^{2}}{|A|^{2}}\right|^{2} & =\left|\beta\,\frac{\nabla H^{\alpha}}{H^{\alpha}-c}-2\,\frac{\nabla\psi}{\psi}\right|^{2}\\
 & \le(1+\varepsilon)\,\beta^{2}\left|\frac{\nabla H^{\alpha}}{H^{\alpha}-c}\right|^{2}+(1+\varepsilon^{-1})\,4\left|\frac{\nabla\psi}{\psi}\right|^{2}
\end{split}
\label{eq cb nabla f eq 0}
\end{equation}
for an arbitrary $\varepsilon>0$ to be chosen. We substitute (\ref{eq cb nabla f eq 0})
into (\ref{eq cb Lf ge 0}). Let us for the moment assume $H\le\delta\,|A|$
at $(p_{0},t_{0})$ for a (small) constant $\delta>0$ to be chosen.
We obtain 
\begin{equation}
\begin{split}0 & \le\left((1+\varepsilon)\frac{\beta^{2}}{2}-\beta\right)\left|\frac{\nabla H^{\alpha}}{H^{\alpha}-c}\right|^{2}+2\,|\alpha-1|\,\delta\left|\frac{\nabla H}{H}\right|^{2}\\
 & \quad+\left(2-\beta\,\frac{H^{\alpha}}{H^{\alpha}-c}+2\,\frac{|\alpha-1|}{\alpha}\,\delta\right)|A|^{2}\\
 & \quad+2\,\frac{|\alpha-1|}{\alpha}\,H\,w\,\psi^{-1}+\big(2+2\,(1+\varepsilon^{-1})\big)\left|\frac{\nabla\psi}{\psi}\right|^{2}\,.
\end{split}
\label{eq cb after Young}
\end{equation}
We use 
\[
\left|\frac{\nabla H}{H}\right|^{2}=\left|\nabla\log H\right|^{2}=\left|\alpha^{-1}\nabla\log H^{\alpha}\right|^{2}=\frac{1}{\alpha^{2}}\left|\frac{\nabla H^{\alpha}}{H^{\alpha}}\right|^{2}\le\frac{1}{\alpha^{2}}\left|\frac{\nabla H^{\alpha}}{H^{\alpha}-c}\right|^{2}
\]
and (\ref{eq ubH nabla psi}) to obtain from (\ref{eq cb after Young})
with certain controlled constants 
\begin{equation}
\begin{split}0 & \le\left((1+\varepsilon)\,\frac{\beta^{2}}{2}-\beta+2\,\frac{|\alpha-1|}{\alpha^{2}}\,\delta\right)\left|\frac{\nabla H^{\alpha}}{H^{\alpha}-c}\right|^{2}\\
 & \quad+\left(2-\beta\,\frac{H^{\alpha}}{H^{\alpha}-c}+2\,\frac{|\alpha-1|}{\alpha}\,\delta\right)|A|^{2}\\
 & \quad+C\,\psi^{-1}+C(\varepsilon)\,\psi^{-2}\,.
\end{split}
\label{eq cb up to coef}
\end{equation}
We choose $\beta=2-\frac{c^{2}}{2}$. For sufficiently small (but
controlled) $\varepsilon,\delta>0$, then hold 
\begin{equation}
\begin{split}(1+\varepsilon)\,\frac{\beta^{2}}{2}-\beta+2\,\frac{|\alpha-1|}{\alpha^{2}}\,\delta & =\frac{1}{2}\left(4-2\,c^{2}+\frac{c^{4}}{4}\right)-\left(2-\frac{c^{2}}{2}\right)+\varepsilon\,\frac{\beta^{2}}{2}+2\,\frac{|\alpha-1|}{\alpha^{2}}\,\delta\\
 & =-\frac{c^{2}}{2}+\frac{c^{4}}{4}+\varepsilon\,\frac{\beta^{2}}{2}+2\,\frac{|\alpha-1|}{\alpha^{2}}\,\delta\\
 & \le0\qquad\text{(note that \ensuremath{c\le1})}
\end{split}
\label{eq cb 1. coef}
\end{equation}
and 
\begin{equation}
\begin{split}2-\beta\,\frac{H^{\alpha}}{H^{\alpha}-c}+2\,\frac{|\alpha-1|}{\alpha}\,\delta & =\frac{2\,(H^{\alpha}-c)-\left(2-\frac{c^{2}}{2}\right)H^{\alpha}}{H^{\alpha}-c}+2\,\frac{|\alpha-1|}{\alpha}\,\delta\\
 & =\frac{-2\,c+\frac{c^{2}}{2}\,H^{\alpha}}{H^{\alpha}-c}+2\,\frac{|\alpha-1|}{\alpha}\,\delta\\
 & \stackrel{\text{\eqref{eq cb control H}}}{\le}\frac{-c}{H^{\alpha}-c}+2\,\frac{|\alpha-1|}{\alpha}\,\delta\\
 & \stackrel{\text{\eqref{eq cb control H}}}{\le}\frac{-c^{2}}{2}+2\,\frac{|\alpha-1|}{\alpha}\,\delta\\
 & \le\frac{-c^{2}}{4}\;.
\end{split}
\label{eq cb 2. coef}
\end{equation}
We substitute (\ref{eq cb 1. coef}) and (\ref{eq cb 2. coef}) into
(\ref{eq cb up to coef}) and multiply by $\psi^{2}$: 
\[
0\le-\frac{c^{2}}{4}\,|A|^{2}\,\psi^{2}+C\,\psi+C(\varepsilon)\,.
\]
The localization function $\psi$ is bounded by $a$ and $\varepsilon>0$
is a controlled quantity. Thus, there is a controlled constant such
that 
\[
\psi\,|A|\le C\qquad\text{at }(p_{0},t_{0})\,.
\]

We have assumed $H\le\delta\,|A|$ at $(p_{0},t_{0})$. Now we rectify
this. If $|A|<\frac{H}{\delta}$, then we still find $\psi\,|A|\le C$
by the control on $H$, $\delta$, and $\psi$. Because of (\ref{eq cb control H})
it thus holds 
\[
f(p_{0},t_{0})=\left.\left(\log|A|^{2}+2\,\log\psi-\beta\,\log(H^{\alpha}-c)\right)\right|_{(p_{0},t_{0})}\le C\,.
\]
At any point $(p,t)$ the function $f$ is bounded by this constant
or by its initial values. Through consideration of $\exp\frac{f}{2}$,
we obtain 
\[
\psi\,|A|\le(H^{\alpha}-c)^{\beta/2}\,\max\left\{ \sup_{t=0}\frac{\psi\,|A|}{(H^{\alpha}-c)^{\beta/2}},\;C\right\} \le C\,\max\left\{ \sup_{t=0}\psi\,|A|,\;1\right\} ,
\]
which is the asserted inequality (\ref{eq curvature bound}).
\end{proof}

\section{Complete hypersurfaces\label{sec:Complete-hypersurfaces}}

\begin{thmbis}{thm intro}\label{thm main} Let $\alpha>0$. Let $\Omega_{0}\subset\mathbb{R}^{n}$
be open. Let $u_{0}\colon\Omega_{0}\to\mathbb{R}$ be smooth and such
that $\graph u_{0}$ is of positive mean curvature $H[u_{0}]>0$.
Furthermore, we suppose that the sets $\{x:u_{0}(x)\le a\}$ are compact
for any $a\in\mathbb{R}$.

Then, there exists a relatively open set $\Omega\subset\mathbb{R}^{n}\times[0,\infty)$
compatible with the $\Omega_{0}$ from above ($\Omega\cap\left(\R^{n}\times\{0\}\right)=\Omega_{0}$),
and there exists a continuous function $u\colon\Omega\to\mathbb{R}$
which is smooth on $\Omega\setminus\left(\Omega_{0}\times\{0\}\right)$,
and such that $u(\cdot,0)=u_{0}$, and that $u$ is a solution of
the graphical $H^{\alpha}$-flow (\ref{eq H^alpha-flow graph}). Moreover,
$u$ fulfills the following maximality condition: There is a continuous
function $\overline{u}\colon\mathbb{R}^{n}\times[0,\infty)\to\overline{\mathbb{R}}$
such that $\{(x,t):\overline{u}(x,t)\in\mathbb{R}\}=\Omega$ and $\overline{u}|_{\Omega}=u$.\end{thmbis}
\begin{rem}
The maximality condition implies the completeness at every fixed time.
On the other hand it implies that the flow is maximal in the sense
that it is not arbitrarily stopped or runs into any singularities.
\end{rem}

\begin{proof}
Let $(a_{k})\subset\mathbb{R}$ be a sequence of heights such that
$a_{k}\to\infty$ and such that $Q_{k}\coloneqq\{u_{0}<a_{k}\}$ are
smooth, open, and bounded domains. (The existence of such a sequence
is guaranteed by Sard's Theorem.) For fixed $k\in\mathbb{N}$, we
consider the initial boundary value problem (\ref{eq ap ap}) with
domain $Q_{k}$ and initial datum $u_{0}|_{Q_{k}}$. (We observe that
$u_{0}|_{\partial Q_{k}}\equiv a_{k}$ because if $x\in\partial Q_{k}$
and $Q_{k}\ni x_{i}\to x\in\partial Q_{k}$, then, by the compactness
of $\{y:u_{0}(y)\le a_{k}\}$, the boundary point is in this set:
$x\in\{y:u_{0}(y)\le a_{k}\}\subset\Omega_{0}$. Therefore, $u_{0}(x)$
is well-defined and $u_{0}(x)=a_{k}$ must hold by the continuity
of $u_{0}$.) 
\global\long\def\ta{\tilde{\alpha}}%
 The exponent function will be denoted $\ta$ here. It is chosen such
that $\ta\equiv\alpha$ on $\{u_{0}\le a_{k}-1\}$ and such that $\ta(x)$
is always between $\alpha$ and $1$ (including those values). The
cut-off function $\xi$ in (\ref{eq ap ap}) is chosen according to
the assumptions on it, as is the constant $c$. Let $v_{k}\colon Q_{k}\times[0,\infty)\to\mathbb{R}$
be the solution from Lemma \ref{lem ap}. We extend $v_{k}$ to $\mathbb{R}^{n}\times[0,\infty)$
by setting $\overline{v}_{k}(x,t)=v_{k}(x,t)$ if $(x,t)\in Q_{k}\times[0,\infty)$
and $\overline{v}_{k}(x,t)=a_{k}+ct$ if $(x,t)\notin Q_{k}\times[0,\infty)$.
Then $\overline{v}_{k}$ is a continuous function on $\mathbb{R}^{n}\times[0,\infty)$.

Because the solutions of (\ref{eq ap ap}) are growing in time, the
$t$-dependent set $\{u(\cdot,t)\le a_{k}-1\}$ is shrinking with
$t$. For this reason, $\big(\graph\overline{v}_{k}(\cdot,t)\big)_{t\ge0}$
moves by the $H^{\alpha}$-flow below the height $a_{k}-1$. This
makes the a priori bounds from section \ref{sec:A-priori-estimates}
applicable. These yield estimates on $\nabla\overline{v}_{k}$ and
$\nabla^{2}\overline{v}_{k}$ at points $(x,t)$ where $\overline{v}_{k}(x,t)<a$
and $t<t_{*}$ for arbitrary $a<a_{k}-2$ and $t_{*}>0$. The estimates
depend on $a$ and $t_{*}$ but not on $k$. Estimates on $\partial_{t}\overline{v}_{k}$
of the same kind are obtained from the equation of graphical $H^{\alpha}$-flow
once we have the estimates on $\nabla\overline{v}_{k}$ and $\nabla^{2}\overline{v}_{k}$.
This is enough to apply the Arzelà-Ascoli argument of \cite{SS}.
It gives a subsequence of $\overline{v}_{k}$ and a continuous function
$\overline{u}\colon\mathbb{R}^{n}\times[0,T)\to\overline{\mathbb{R}}$
such that $\overline{v}_{k_{l}}\to\overline{u}$ pointwise and locally
uniformly on $\Omega\coloneqq\{\overline{u}\in\mathbb{R}\}$.

In addition to the local $C^{2;1}$-estimates mentioned above, section
\ref{sec:A-priori-estimates} also supplies us with the estimates
(\ref{eq lower bound H}) and (\ref{eq upper bound H}) which give
both-sided control on $H^{\alpha}$. This lets us locally control
the parabolicity constants. Here, ``local'' means on sets $\{(x,t)\colon\overline{v}_{k}(x,t)<a,\;t<t_{*}\}$.
As in the proof of Lemma \ref{lem ap}, local estimates on all derivatives
of the $\overline{v}_{k}$ uniform in $k$ now follow from general
theory. These estimates are local in time also for $t\to0$: Uniform
estimates are only obtained for $t>\varepsilon$ and the constants
then depend on $\varepsilon$. These estimates are still sufficient
to show that the locally uniform convergence $\overline{v}_{k_{l}}\to\overline{u}$
on $\Omega$ is a locally smooth convergence on $\Omega\setminus\left(\Omega_{0}\times\{0\}\right)$.

We set $u\coloneqq\overline{u}|_{\Omega}$. It is quite clear that
$u(x,0)=u_{0}(x)$ for $x\in\Omega_{0}$ and that $\overline{u}(x,0)=\infty$
for $x\notin\Omega_{0}$. That $u$ is a solution of the graphical
$H^{\alpha}$-flow is fairly easy to see too: Because $\big(\graph\overline{v}_{k}(\cdot,t)\big)_{t\ge0}$
moves by $H^{\alpha}$-flow below the height $a_{k}-1$ and $\overline{v}_{k_{l}}$
converges locally smoothly to $u$ on $\Omega\setminus\left(\Omega_{0}\times\{0\}\right)$,
$u$ must solve the graphical $H^{\alpha}$-flow on $\Omega\setminus\left(\Omega_{0}\times\{0\}\right)$. 
\end{proof}

\end{document}